\documentclass[12pt]{article}
\usepackage{graphicx} 

\usepackage{float}

\usepackage{amsfonts}
\usepackage{amsthm}
\usepackage{amsmath}
\usepackage{amssymb}

\usepackage{comment}
\usepackage{xcolor}

\usepackage{tikz}
\usetikzlibrary{arrows.meta, decorations.pathreplacing}

\usepackage{marginnote}

\newcommand{\lG}{\mathcal{G}}
\newcommand{\lH}{\mathcal{H}}

\newcommand{\lV}{\mathcal{V}}

\newcommand{\dd}{\mathrm{d}}

\theoremstyle{plain} 
\newtheorem{theorem}{Theorem}[section]
\newtheorem{mtheorem}{Theorem}
\newtheorem{lemma}[theorem]{Lemma}

\newtheorem{proposition}[theorem]{Proposition}

\theoremstyle{definition}
\newtheorem{definition}{Definition}

\newtheorem{question}{Question}

\theoremstyle{remark}

\title{The Uniform Random Walk on graphs, loop processes and graphings}
\author{Miklos Abert, Adam Arras, Jaelin Kim}
\date{May 2025}

\begin{document}

\maketitle

\begin{abstract}
We define the \emph{Uniform Random Walk} (URW) on a connected, locally finite graph as the weak limit of the uniform walk of length $n$ starting at a fixed vertex. When the limit exists, it is necessarily Markovian and is independent of the starting point. For a finite graph, URW equals the Maximal Entropy Random Walk (MERW). 

We investigate the existence and phase transitions of URW for loop perturbed regular graphs and their limits. It turns out that for a sequence of finite graphs, it is the global spectral theory of the limiting graphing that governs the behavior of the finite MERWs. 

In the delocalized phase, we use a `membrane argument', showing that the principal eigenfunction of an expander graphing is stable under a small diagonal perturbation. This gives us: 1) The existence of URW on leaves; 2) The URW is a unique entropy maximizer; 3) The MERW of a finite graph sequence Benjamini-Schramm converges to the URW of the limiting graphing. 

In the localized phase, the environment seen by the particle takes the role of a finite stationary measure. We show that for canopy trees, the URW exists, is transient and maximizes entropy. We also show that for large finite graphs where most vertices have a fixed degree, localization of MERW is governed by the adjacency norm. 

\end{abstract}

\newpage

\section{Introduction}

Let $G$ be a locally finite, connected graph. We aim to understand which
random walk (that is, nearest neighbor Markov chain) on $G$ has maximal
entropy. Simple Random Walk (SRW) maximizes stepwise entropy, but already
for finite non-regular graphs, it is globally suboptimal. In the finite
case, the space of walk trajectories $X_{G}$ is a subshift of finite type,
and random walks on $G$ induce shift invariant probability measures on it.
Under standard assumptions on $G$, $X_{G}$ admits a unique measure, whose
Kolmogorov-Sinai (KS) entropy equals the topological entropy of $X_{G}$ and
is therefore maximal~\cite{goodman1971relating}. This measure was introduced
in~\cite{parry1964intrinsic} and the associated random walk has been
extensively studied for finite graphs in both theoretical~\cite%
{bowen1970markov,walters1978equilibrium,lind1995symbolic,burda2009localization}
and applied contexts~\cite%
{brin1998anatomy,langville2006pagerank,ochab2013maximal} under the name
Maximal Entropy Random Walk (MERW).

The theory for\emph{\ }infinite\emph{\ }graphs is much less understood,
starting with the question how to measure entropy. When insisting to use KS 
entropy, one is essentially confined to positive recurrent walks. This has
been developed in the early works of Vera-Jones and Gurevich \cite%
{vere1967ergodic,vere1968ergodic,gurevich1998thermodynamic}. This
restriction, however, excludes the simple random walk for \emph{all} infinite
regular graphs.

A more recent approach takes a spectral route. For a finite graph, the MERW\
can be obtained as the \emph{Doob transform} of the principal adjacency
eigenfunction $F$ by setting the transition probabilities 
\[
p_{xy}=\frac{F(y)}{\lambda F(x)} 
\]%
where $\lambda $ is the eigenvalue. One can then call MERW-s of infinite graphs the Doob transforms of arbitrary positive $\lambda$-eigenfunctions (typically a large family). This is the route taken by Duboux, Gerin and Offret in their nice recent papers \cite{thibaut2024maximum,duboux2025maximal, offret2025maximal}. They restrict $\lambda$ to be the adjacency norm on $l^2(G)$: this is reasonable for amenable graphs but again excludes the simple random walk on a regular tree as a candidate for MERW. 

Motivated by Patterson-Sullivan theory and Bowen's classical theorem on
maximal entropy flow \cite%
{bowen1972periodic,patterson1976limit,sullivan1979density}, we propose a
canonical candidate for the random walk of maximal entropy, together with a
suitable notion of entropy.

\begin{definition}
\label{def:defURWn}For a locally finite graph $G$, rooted at $o$, let the 
\emph{Uniform Random Walk} (URW) be the weak limit of the uniform
distribution on walks of length $n$ starting at $o$, when this limit exists.
\end{definition}

This means that we consider the space of infinite trajectories starting at $%
o $, put the uniform probability measure on the first $n$ steps and attempt to take a
weak limit. We show that when the URW exists, it is a random walk that
is independent of the root $o$, with transition kernel $U=(u_{xy})$ given by 
\begin{equation}
u_{xy}=\lim_{n\rightarrow \infty }\frac{W_{n-1}(y)}{W_{n}(x)}
\label{eq:defURWn}
\end{equation}%
where $W_{n}(x)$ denotes the number of walks of length $n$ starting at $x$.
For a finite graph, the URW equals the MERW and for a regular infinite
graph, URW is the simple random walk.

We need to extend the definition of URW to weighted graphs, as well, to be
able to observe phase transitions. For an arbitrary non-negative matrix $%
A=(A_{xy}\mid x,y\in V)$, one can adapt the trajectory weak limit language,
by assigning the product of edge weights to a walk, or more directly, define
the URW on $V$ as 
\[
u_{xy}=\lim_{n\rightarrow \infty }\frac{A_{xy}W_{n-1}(y)}{W_{n}(x)} 
\]%
where $W_{n}(x)=\left\langle \mathbf{1}_{x},A^{n}\mathbf{1}_{V}\right\rangle $. It
makes sense to assume that the rows of $A$ have bounded $l^{1}$ norm.


We will analyze the existence and behavior of the URW for \emph{%
loop perturbed regular graphs}.

\begin{definition}\label{def:lprg}
Let $G$ be a regular graph, $\omega \subseteq V(G)$ and $\sigma \geq 0$. We
obtain the weighted graph $G+\sigma V_{\omega }$ from $G$ by adding
self-loops at $\omega $ with weight $\sigma $.
\end{definition}

That is, we add $\sigma $ to the diagonal of the adjacency operator at the
vertex set $\omega $. Studying these graph models have a vast history,
including the theory of random Schr{\"o}dinger operators \cite{anderson1958absence,pastur1992spectra}. 

We start with the simplest case of $\omega $ being just one vertex. In operator language, this is a rank one perturbation of the adjacency matrix.
The spectral side of this perturbed graph is quite well understood \cite{simon1993spectral}. Here
we obtain a complete description of URW in terms of phase transitions and
critical behavior. Let the \emph{walk growth} of a graph be \[
\rho _{\sigma}=\lim_{n\rightarrow \infty }\sqrt[n]{W_{n}(x)}\text{.} 
\]%
The critical value $\sigma^{\ast }=\sigma^{\ast }(G, o)\in \lbrack 0,d)$ is defined in
terms of a Green function $F_o$. We suppress the details here, see Theorem~\ref{thm:Rank1} for the full result.

\begin{mtheorem} \label{thm:rankone} 
Let $G$ be a $d$-regular graph. For every $\sigma \geq 0$ and $o\in V(G)$,
the URW on $G+\sigma V_{\{o\}}$ exists.

\begin{itemize}
\item For $\sigma<\sigma^{\ast }$, $\rho _{\sigma}=d$ and the URW is transient.

\item For $\sigma>\sigma^{\ast }$, the URW is positive recurrent and is localized
near $o$, with a stationary measure that decays exponentially with the
distance from $x$. In this phase, $\rho _{\sigma}$ is a strictly monotonely
increasing function of $\sigma$.

\item For $\sigma=\sigma^{\ast }$, the URW is positive recurrent if the Green function 
$F_o$ is $\ell ^{2}$ and null recurrent otherwise.
\end{itemize}
\end{mtheorem}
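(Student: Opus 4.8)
The plan is to reduce everything to an explicit spectral computation for the rank-one perturbation $A_\sigma := A + \sigma P_o$, the adjacency operator of $G+\sigma V_{\{o\}}$, where $A$ is the adjacency operator of $G$ and $P_o$ the orthogonal projection onto $\delta_o$, and to read the three phases off the singularities of a single generating function. Write $W_n(x) = \langle \delta_x, A_\sigma^n \mathbf{1}\rangle$ and $\widehat W_x(z) = \sum_{n\ge 0} W_n(x)\, z^{-n-1}$. Since $G$ is $d$-regular, $A\mathbf{1} = d\mathbf{1}$, so $(z-A)^{-1}\mathbf{1} = (z-d)^{-1}\mathbf{1}$, and the rank-one resolvent identity (cf.\ \cite{simon1993spectral}) gives, for $z$ off $\mathrm{spec}(A)$ with $\sigma G_o(z)\ne 1$,
\[
\widehat W_x(z) = \frac{1}{z-d}\Bigl(1 + \frac{\sigma\, g_{xo}(z)}{1-\sigma G_o(z)}\Bigr), \qquad g_{xo}(z) := \langle \delta_x,(z-A)^{-1}\delta_o\rangle,\quad G_o := g_{oo},
\]
together with $\langle \delta_x,(z-A_\sigma)^{-1}\delta_x\rangle = G_x(z) + \sigma\, g_{xo}(z)^2/(1-\sigma G_o(z))$, which will carry the recurrence classification. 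On $(\lVert A\rVert,\infty)$ the function $G_o$ is positive, real-analytic and strictly decreasing (derivative $-\lVert(z-A)^{-1}\delta_o\rVert_2^2$), tends to $0$ at $+\infty$ and to $F_o(\lVert A\rVert^+)\in(0,\infty]$ at $\lVert A\rVert^+$. Put $\sigma^\ast := 1/G_o(d)\in[0,d)$; thus $\sigma^\ast>0$ exactly when $G_o(d)<\infty$, i.e.\ exactly when the simple random walk on $G$ is transient, since $d\,G_x(d)=\sum_n (A^n)_{xx}d^{-n}$ for every $x$.

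I would then locate $\rho_\sigma$ as the modulus of the dominant singularity of $\widehat W_x$. If $\sigma<\sigma^\ast$ then $1-\sigma G_o(z)>0$ for all $z\ge d$, so $z=d$ is the dominant singularity and there $\widehat W_x$ has a simple pole; hence $\rho_\sigma=d$ and $W_n(x)/d^n\to C_x := 1 + \sigma g_{xo}(d)/(1-\sigma G_o(d))>0$, with $C$ a positive $d$-eigenfunction of $A_\sigma$. If $\sigma>\sigma^\ast$ then $G_o(d)>1/\sigma$ forces a unique $z_\sigma\in(d,\infty)$ with $G_o(z_\sigma)=1/\sigma$, a simple pole and the dominant singularity, so $\rho_\sigma=z_\sigma>d$, strictly increasing in $\sigma$ (as $G_o$ is strictly decreasing), with $z_\sigma\downarrow d$ as $\sigma\downarrow\sigma^\ast$, and $W_n(x)/z_\sigma^n\to\widetilde C_x\propto g_{xo}(z_\sigma)$, which decays exponentially in $\mathrm{dist}(x,o)$ by $g_{xo}(z)\le(z-d)^{-1}(d/z)^{\mathrm{dist}(x,o)}$. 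If $\sigma=\sigma^\ast$ (assume $\sigma^\ast>0$; for $\sigma^\ast=0$ the relevant walk at $\sigma=0$ is the simple random walk), then $1-\sigma^\ast G_o(z)=\sigma^\ast(G_o(d)-G_o(z))\to0$ as $z\downarrow d$; with $F_o(x):=g_{xo}(d)$ one has $-G_o'(d^+)=\lVert F_o\rVert_2^2$, so when $F_o\in\ell^2$ (automatic if $d>\lVert A\rVert$) the point $z=d$ is a double pole and $W_n(x)\sim \lVert F_o\rVert_2^{-2}F_o(x)\,n\,d^{n-1}$, and when $F_o\notin\ell^2$ it is a milder branch point with $W_n(x)\sim c_x n^{\alpha}d^{n}$, $0<\alpha<1$, $c_x\propto F_o(x)$; either way $\rho_\sigma=d$ and $A_{\sigma^\ast}F_o=dF_o$ since $F_o(o)=G_o(d)=1/\sigma^\ast$. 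Any further singularity of $\widehat W_x$ on $\{|z|=\rho_\sigma\}$ is a branch point of $G_o$ at an edge of $\mathrm{spec}(A)$ — never a pole, since $G_o$ is non-real off $\dR$ and non-positive on $(-\infty,-\lVert A\rVert]$, so $G_o=1/\sigma$ has no other solution — and thus contributes a strictly smaller-order term. Hence $W_n(x)\sim c_x b_n$ with $b_n$ independent of $x$ and $b_n^{1/n}\to\rho_\sigma$, so $\rho_\sigma=\lim_n W_n(x)^{1/n}$ exists and, via \eqref{eq:defURWn} in its weighted form, the URW exists with kernel $u_{xy}=(A_\sigma)_{xy}h_y/(\rho_\sigma h_x)$ — the Doob transform of $A_\sigma$ by the positive $\rho_\sigma$-eigenfunction $h$ (namely $C$, $\widetilde C$, or $F_o$).

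It remains to classify the walk, which I would do through its Green function. For a Doob transform $u^{(n)}_{xx}=(A_\sigma^n)_{xx}/\rho_\sigma^n$, so $\mathcal{G}^{U}(x,x)=\sum_n u^{(n)}_{xx}=\lim_{z\downarrow\rho_\sigma}z\bigl(G_x(z)+\sigma g_{xo}(z)^2/(1-\sigma G_o(z))\bigr)$. For $\sigma<\sigma^\ast$ this is finite ($1-\sigma G_o(d)>0$ and $G_x(d)<\infty$ as the simple random walk is transient), so the URW is transient. For $\sigma\ge\sigma^\ast>0$ the denominator $1-\sigma G_o(z)\to0$ while $g_{xo}(\rho_\sigma)^2>0$, so $\mathcal{G}^{U}(x,x)=\infty$ and the URW is recurrent; then, $A_\sigma$ being symmetric, $\pi_x:=h_x^2$ is $U$-invariant, and by recurrence it is the unique invariant measure up to scaling. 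For $\sigma>\sigma^\ast$, $h=\widetilde C$ decays exponentially, so $\pi$ is finite and the URW is positive recurrent with stationary measure $\propto g_{xo}(z_\sigma)^2$ decaying exponentially from $o$; for $\sigma=\sigma^\ast$, $h=F_o$, so $\pi$ is finite iff $F_o\in\ell^2$, giving positive recurrence there and null recurrence otherwise. Together with the monotonicity of $z_\sigma$ this is the whole theorem.

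The real work is the coefficient asymptotics $W_n(x)\sim c_x b_n$ with an $x$-independent profile in the critical, null-recurrent case $\sigma=\sigma^\ast$, $F_o\notin\ell^2$: there $z=d$ sits at the top of the essential spectrum of $A$, the local behaviour of $G_o$ and of the off-diagonal $g_{xo}$ at $z=d$ is a priori unconstrained for a general $d$-regular graph, and upgrading a singularity estimate to a genuine asymptotic (not merely a $\limsup$) requires a Tauberian argument — legitimate since $W_n(x)\ge0$ — together with the identification that the singular parts of $g_{xo}$ and $G_o$ at $z=d$ are proportional with ratio $F_o(x)/F_o(o)$. The remaining cases are comparatively routine: when the dominant singularity is a pole the asymptotics come from residues once one has checked, as above, that no competing pole lies elsewhere on $\{|z|=\rho_\sigma\}$, and the lower-order branch-cut contributions are handled by the same transfer-theorem estimate.
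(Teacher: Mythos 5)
Your overall architecture is the same as the paper's: the Krein/rank-one formula for the resolvent (the paper's Lemma~\ref{lmm:Rank1comb} in generating-function variables $t=1/z$), the threshold $\sigma^{\ast}=1/G_o(d)=d/f_{oo}(1/d)$, the identification of the URW as the Doob transform of $C$, $\widetilde C\propto g_{\cdot o}(z_\sigma)$ or $F_o$, the monotonicity of $z_\sigma$ from monotonicity of $G_o$, and the classification of recurrence through $\sum_n(A_\sigma^n)_{xx}\rho_\sigma^{-n}$ and the invariant measure $\pi=h^2$. In the sub- and supercritical phases your argument closes (though for subcritical you do not need any singularity analysis at all: $W_n(x)=d^n\bigl(1+\tfrac{\sigma}{d}\sum_{k<n}(A_\sigma^k)_{ox}d^{-k}\bigr)$ and the inner sum increases to the finite limit $f^{\sigma}_{ox}(1/d)$, which gives the ratio limit by monotone convergence; and for supercritical, finiteness of $\pi$ should be justified by the fact that $z_\sigma$ is an isolated $\ell^2$ eigenvalue so $g_{\cdot o}(z_\sigma)\in\ell^2$ --- pointwise exponential decay alone does not beat the $(d-1)^k$ growth of spheres).

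The genuine gap is exactly where you flag "the real work": the critical case $\sigma=\sigma^{\ast}$ with $F_o\notin\ell^2$. Your plan is to extract a true asymptotic $W_n(x)\sim c_x b_n$ from the singularity of $\widehat W_x$ at $z=d$ by a transfer/Tauberian argument. But for a general transient $d$-regular graph the behaviour of $G_o$ and $g_{xo}$ at the spectral edge $z=d$ is not regularly varying and $\widehat W_x$ need not be $\Delta$-analytic, so neither singularity-analysis transfer theorems nor Karamata's Tauberian theorem apply; positivity of the coefficients alone does not rescue this. The paper avoids the issue entirely: it never proves asymptotics of $W_n$, only of the \emph{ratio}. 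Decomposing each walk from $x$ to $o$ at its first visit to $o$ gives $(H^n)_{ox}d^{-n}=\sum_{k+l=n}q_{xo}(k)h(l)$ with $q_{xo}$ the SRW first-hitting distribution and $h(l)=(H^l)_{oo}d^{-l}$; summing and dividing by $\sum_{k<n}h(k)$ (which diverges at criticality) turns the ratio of partial sums into $\sum_l q_{xo}(l)g(l,n)$ with $g(l,n)\uparrow 1$, and monotone convergence yields $W_{n-1}(y)/W_n(x)\to d^{-1}f_{oy}(1/d)/f_{ox}(1/d)$ with no regularity hypothesis whatsoever. You would need to either adopt this renewal/Ces\`aro device or restrict to graphs whose edge spectral behaviour is known (e.g.\ $\mathbb{Z}^d$, $T_d$); as written, the critical null-recurrent case is not proved.
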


For the Euclidean lattices $G=\mathbb{Z}^{d}$, this yields the following.
For $d=1,2$, we have $\sigma^{\ast }=0$, so localization happens immediately. For 
$d\geq 3$ there is a nontrivial transient phase, that is, $\sigma^{\ast }>0$. At
the critical value $\sigma=\sigma^{\ast }$, the URW is null recurrent for $d=3,4$ and
is positive recurrent for $d\geq 5$. Note that for general graphs, the well-studied emergence of an $l^2$ bound state  does \emph{not} coincide with any of the URW phases above. 

For a finite graph, Theorem \ref{thm:rankone} seems to be vacuously true: we have $\sigma^{\ast
}=0$, that is, only the positive recurrent phase survives and the URW
equals the unique MERW. The picture changes, though, if we take a \emph{%
sequence} of finite $d$-regular graphs $(G_{n})$, with $\omega _{n}\subseteq
V(G_{n})$ and $\sigma _{n}\geq 0$ and consider the Benjamini-Schramm (BS)
limit of $\mathrm{MERW}(G_{n}+\sigma _{n}V_{\omega _{n}})$. 

We first analyze the \emph{delocalized} phase, where the noise amplitude is small. Our next 
theorem makes the case that from the graph limit perspective, URW is a
reasonable candidate for MERW for infinite graphs.

\begin{mtheorem} \label{thm:mambrane}
Let $(G_{n})$ be a sequence of finite $d$-regular graphs with spectral gap and let $\omega _{n}\subseteq V(G_{n})$. Assume $%
G_{n}+V_{\omega _{n}}$ Benjamini-Schramm converges to the random rooted, looped graph $(G+V_{\omega },o)$. Then for all 
\[
\sigma < \lim_{n\rightarrow \infty }\inf (d-\lambda _{2}(G_{n})) 
\]%
the URW of the discrete graphs $G+\sigma V_{\omega }$ exist a.s. and we have 
\[
\mathrm{MERW}(G_{n}+\sigma V_{\omega _{n}})\rightarrow \mathrm{URW}(G+\sigma V_{\omega }) 
\]%
in Benjamini-Schramm convergence. 
\end{mtheorem}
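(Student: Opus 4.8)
The plan is to run everything through the identification of $\mathrm{MERW}(G_n+\sigma V_{\omega_n})$ with the Doob transform of the Perron eigenfunction $F_n$ of $M_n:=A_n+\sigma V_{\omega_n}$, so that $p^{(n)}_{xy}=A^{(n)}_{xy}F_n(y)/(\lambda_n F_n(x))$ with $\lambda_n=\|M_n\|$, and then to transfer $(\lambda_n,F_n)$ to the limit. The engine is a \emph{uniform spectral gap for $M_n$}: writing $\delta:=\lim_n\inf(d-\lambda_2(G_n))>\sigma$, for large $n$ one has $\langle A_nf,f\rangle\le(d-\delta)\|f\|^2$ on $\mathbf 1^\perp$ and $\|\sigma V_{\omega_n}\|\le\sigma$, hence $\langle M_nf,f\rangle\le(d-\delta+\sigma)\|f\|^2$ there, while $\lambda_n\ge\langle M_n\mathbf 1,\mathbf 1\rangle/\|\mathbf 1\|^2\ge d$. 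By min--max, $\lambda_n$ is therefore a simple eigenvalue of $M_n$ separated from the rest of its spectrum by at least $\delta-\sigma>0$, and $d\le\lambda_n\le d+\sigma$. Projecting the eigenequation onto $\mathbf 1^\perp$ and inverting $\lambda_n-A_n\ge\delta$ there gives the membrane estimate $\|F_n-c_n\mathbf 1\|_2\le(\sigma/\delta)\|F_n\|_2$, so with the normalisation $\|F_n\|_2^2=|V(G_n)|$ one gets $c_n\ge\sqrt{1-\sigma^2/\delta^2}=:c_\ast>0$: the constant component of the Perron eigenfunction never degenerates, for every $\sigma<\delta$.

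Next I would set up the limiting graphing. After an ergodic decomposition (reducing to the ergodic case) realise $(G+V_\omega,o)$ as a $d$-regular graphing $\mathcal G$ on a probability space $(\Omega,\mu)$ with looped set $\omega$; approximating any $f\in L^2(\mu)$ with $f\perp\mathbf 1$ by a function of a bounded ball around the root, transporting it to $G_n$, and using that $\langle M_n\cdot,\cdot\rangle$ of a local function is detected by Benjamini--Schramm convergence, shows the gap passes to the limit: $\langle A_{\mathcal G}f,f\rangle\le(d-\delta)\|f\|_2^2$ for $f\perp\mathbf 1$. Hence $M_\infty:=A_{\mathcal G}+\sigma V_\omega$ again has a simple isolated top eigenvalue $\lambda_\infty:=\|M_\infty\|$ with gap $\ge\delta-\sigma$ and a positive eigenfunction $F\in L^2(\mu)$, $\|F\|_2=1$, obeying the same membrane estimate (Perron--Frobenius for the ergodic positivity-preserving $M_\infty$). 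The same local-approximation argument applied to a near-maximiser of $M_\infty$ gives $\liminf_n\lambda_n\ge\lambda_\infty$.

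I would then prove convergence of the spectral data. The eigenequation at a vertex forces $F_n(y)/F_n(x)\in[(d+\sigma)^{-1},d+\sigma]$ on every edge, and Hölder gives $\mathbb{E}_{o_n}[F_n(o_n)^p]\le1$ for all $p\le2$, so the decorated random graphs $(G_n+\sigma V_{\omega_n},o_n,F_n)$ are tight (in the $L^p$ sense, $p<2$) and $F_n(o_n)$ is uniformly integrable. Along any subsequence the limit decoration $\widetilde F$ is a nonnegative eigenfunction of $M_\infty$ with eigenvalue $\lim\lambda_{n_k}$ and $\langle\widetilde F,\mathbf 1\rangle=\lim c_{n_k}\ge c_\ast>0$; since $\langle M_\infty\widetilde F,\widetilde F\rangle\le\lambda_\infty\|\widetilde F\|_2^2$ this yields $\limsup_n\lambda_n\le\lambda_\infty$, so with the previous step $\lambda_n\to\lambda_\infty$, every subsequential $\widetilde F$ is a top eigenfunction of $M_\infty$, and by simplicity $\widetilde F=F$. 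Thus $(G_n+\sigma V_{\omega_n},o_n,F_n)\to(\mathcal G+\sigma V_\omega,o,F)$ in Benjamini--Schramm convergence. Since $(F,\lambda)\mapsto A_{xy}F(y)/(\lambda F(x))$ is continuous on the admissible local data, $\mathrm{MERW}(G_n+\sigma V_{\omega_n})\to(\mathcal G+\sigma V_\omega,o,p^\infty)$ with $p^\infty_{xy}=A_{xy}F(y)/(\lambda_\infty F(x))$, the Doob transform of $F$ on the leaf of $o$. It remains to identify this with the URW of the leaf: from $\mathbb{E}[W_m(o)]=\langle\mathbf 1,M_\infty^m\mathbf 1\rangle_{L^2(\mu)}$ the membrane estimate gives $c_\ast^2\lambda_\infty^m\le\mathbb{E}[W_m(o)]\le\lambda_\infty^m$ (along even $m$), which upgrades (Borel--Cantelli above, a second-moment/unimodularity argument below) to $\lim_m W_m(x)^{1/m}=\lambda_\infty$ a.s.\ and, via a Tauberian step, to $\lim_m W_m(x)/(\lambda_\infty^m F(x))\in(0,\infty)$ independent of $x$; that is, $u_{xy}=A_{xy}F(y)/(\lambda_\infty F(x))$, so the URW on leaves exists a.s.\ and equals the Benjamini--Schramm limit of the finite MERWs.

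The main obstacle lies in the last two steps. The top eigenvalue $\lambda_\infty=\|M_\infty\|$ is a trace-zero eigenvalue, hence invisible to the integrated density of states, so the usual convergence of spectral measures under Benjamini--Schramm convergence says nothing about it directly; everything must be extracted from genuinely local data. The a priori ratio bounds supply compactness and the uniform gap makes subsequential limits genuine top eigenfunctions, but the fact that this works for \emph{all} $\sigma<\delta$ --- not only for $\sigma$ small enough that $F_n$ is uniformly close to a constant --- rests precisely on the non-degeneracy $c_n\ge c_\ast>0$, which "anchors" the limit eigenfunction even when it is not near-constant; and turning the annealed identity $\mathbb{E}[W_m(o)]\asymp\lambda_\infty^m$ into the quenched leafwise statement, and controlling the tightness of $(G_n,o_n,F_n)$ near $0$ uniformly, are the delicate points that the stability of the membrane is designed to handle.
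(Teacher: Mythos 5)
Your proposal is correct and follows the paper's announced strategy (realize everything as graphings, establish a uniform spectral gap for the perturbed operators, keep the constant component of the Perron eigenfunction non-degenerate, identify the limit as the Doob transform of the limiting eigenfunction), but you execute the convergence of the spectral data differently, and your membrane estimate is more explicit than what appears in the paper's own proof. The paper deduces $\rho_n\to\rho$ and $\langle F_n,\mathbf 1\rangle\to\langle F,\mathbf 1\rangle$ from weak convergence of the spectral measures $\nu_n^{\mathbf 1_{X_n}}$ under Benjamini--Schramm convergence (the atom at the top eigenvalue is visible precisely because its mass $|\langle F_n,\mathbf 1\rangle|^2$ is bounded below), then obtains $F_n\to F$ by approximating $c_nF_n$ uniformly in $n$ by the \emph{local} observable $W_k/\rho_n^k$ through the gap estimate \eqref{eq:convestimate} and an $\varepsilon/3$ argument; it also splits off the case $\lim_n\rho_n=d$ and delegates it to Theorem \ref{thm:staple}, and delegates both the existence of the URW on the leaves and its identification with the Doob transform to Theorem \ref{thm:graphing}. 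You instead prove the quantitative bound $\|F_n-c_n\mathbf 1\|\le(\sigma/\delta)\|F_n\|$, hence $c_n\ge\sqrt{1-\sigma^2/\delta^2}>0$, and extract $F_n\to F$ by tightness of the decorated rooted graphs (using the edge-ratio bounds forced by the eigenequation) together with identification of every subsequential limit as the top eigenfunction, anchored by the non-degenerate constant component; this treats $\rho=d$ and $\rho>d$ uniformly and re-derives the content of Theorem \ref{thm:graphing} where the paper simply cites it. Both routes are sound: the paper's local approximant $W_k/\rho_n^k$ is the cleaner way to see continuity under local convergence (it sidesteps the uniform-integrability bookkeeping for $F_n(o_n)$ near $0$), while your explicit membrane inequality makes transparent why the argument works for every $\sigma$ up to the full gap $\delta$ rather than only for small perturbations.
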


The idea behind Theorem \ref{thm:mambrane} is to use a 'membrane argument' on the limiting graphing $\lG+\sigma \lV_{\omega}$, showing that applying the leafwise random noise $\sigma \lV_{\omega }$ to the principal eigenfunction of $L^{2}(\lG)$ (the constant $1$ function) will not push the function out of
place enough to stop the global adjacency operator having an isolated atom in its spectrum. This
implies (see Theorem \ref{thm:loopproc}) that the graphing adjacency operator of $\lG+\sigma \lV_{\omega }$ still has spectral gap and a unique principal eigenfunction $F$ in $L^2(\lG+\sigma \lV_{\omega })$. This then yields that the $\mathrm{URW}$ on the leaves of the limiting graphing exists, is the Doob transform of $F$ and by spectral gap, equals the limit of MERW-s of the finite graphs. Note that we need some condition on $\sigma$ in Theorem \ref{thm:mambrane}, as in Theorem \ref{thm:staple} we show that for large $\sigma$ the result will not hold. 

As we will see in Theorems \ref{thm:graphing} and \ref{thm:loopproc}, $\mathrm{URW}(G+\sigma V_{\omega })$ in Theorem \ref{thm:mambrane} also has
maximal entropy among \emph{all} the random walks on the graphing $\lG+\sigma \lV_{\omega}$ and it is a unique maximizer in a strong sense. In order to state these properly, we first need an entropy notion that works in all phases. We will use a local (rooted) version of KS entropy, that for finite, irreducible Markov chains, gives back the original notion.

\begin{definition}
Let $G$ be a locally finite weighted graph with adjacency matrix $(A_{xy})$, let 
$P=(p_{xy})$ be a random walk on $G$ and let $o\in V(G)$. For a walk $%
w=(x_{0},\ldots ,x_{n})$ starting at $x_0=o$ let  
\[
a(w)=\prod\limits_{i=1}^{n}A_{x_{i-1}x_{i}}\text{ and }%
p(w)=\prod\limits_{i=1}^{n}p_{x_{i-1}x_{i}}\text{.}
\]%
Let the \emph{%
walk entropy of length }$n$\emph{\ at }$o$ be 
\[
H_{P}^{n}(o)=-\sum\limits_{w}p(w) \log \frac{p(w)}{a(w)}
\]%
and let the \emph{walk entropy rate at }$o$ be   
\[
h_{P}(o)=\lim_{n\rightarrow \infty }\frac{1}{n}H_{P}^{n}(o)
\]%
when the limit exists. 
\end{definition}

For unweighted graphs, $h_{P}(o)$ is the Shannon entropy rate of the $P$-random walk starting at $o$. Using the above weighted notion was suggested in Duda's PhD Thesis \cite[Eq.,(3.16)]{Duda2012}, with the difference that he considered the expected one step entropy production wrt a stationary measure -- something we may not have in the localized phase. 

The natural upper bound for $h_{P}(o)$, playing the role of topological
entropy, is the log of the walk growth 
\[
h_{\mathrm{top}}(o)=\lim_{n\rightarrow \infty }\frac{1}{n}\log W_{n}(o)
\]%
assuming the growth exists. For connected weighted $G$, the existence and value of $%
h_{\mathrm{top}}=h_{\mathrm{top}}(o)$ is independent of $o$. 

\begin{proposition}\label{prp:entropies} 
$h_{P}(x)\leq h_{\mathrm{top}}$ for all $x$. If $h_{P}(x)$ exists for all $x$, it is $P$-harmonic in $x$. 
$h_{P}(x)$ may vary with $x$, but having maximal entropy $h_{P}(x)=h_{\mathrm{top}}$ is independent of $x$. For the $P$-random walk trajectory $(X_n)$, $h_{P}(X_n)$ converges a.s.  
\end{proposition}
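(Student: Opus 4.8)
The plan is to reduce all four assertions to a single recursion for the finite–length walk entropies. Decomposing a length-$n$ walk $w$ from $o$ by its first step $o\to y$ and writing $w=(o,y,w')$ with $w'$ a length-$(n-1)$ walk from $y$, one has $p(w)=p_{oy}p(w')$ and $a(w)=A_{oy}a(w')$, so, using $\sum_{w'}p(w')=1$,
\[
H_P^n(o)=c(o)+(PH_P^{n-1})(o),\qquad c(o):=\sum_{y}p_{oy}\log\frac{A_{oy}}{p_{oy}},
\]
where $c(o)$ is a finite real number (the sum is finite since $G$ is locally finite, and $p_{oy}>0$ forces $A_{oy}>0$). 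With $H_P^0\equiv 0$, iterating gives $\tfrac1nH_P^n=\tfrac1n\sum_{k=0}^{n-1}P^kc$, so $h_P$ is the Ces\`aro limit of $(P^kc)(\,\cdot\,)$ wherever it exists. For the upper bound, fix $n$ and let $q$ be the probability measure on length-$n$ walks from $o$ given by $q(w)=a(w)/W_n(o)$; this is legitimate because $W_n(o)=\sum_w a(w)<\infty$, and $p$ is absolutely continuous with respect to $q$ since $a(w)=0$ implies $p(w)=0$. The Gibbs inequality $D(p\|q)\ge 0$ expands to $-H_P^n(o)+\log W_n(o)\ge 0$, i.e.\ $H_P^n(o)\le\log W_n(o)$; dividing by $n$ and letting $n\to\infty$ gives $h_P(o)\le h_{\mathrm{top}}(o)=h_{\mathrm{top}}$ (in particular $h_P\le\log\sup_x\sum_yA_{xy}<\infty$).

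For harmonicity, divide the recursion by $n$: $\tfrac1nH_P^n(o)=\tfrac{c(o)}{n}+\sum_yp_{oy}\,\tfrac{n-1}{n}\cdot\tfrac1{n-1}H_P^{n-1}(y)$. As $n\to\infty$ the left side tends to $h_P(o)$, the term $c(o)/n$ vanishes, and — since the sum over $y$ has finitely many terms by local finiteness and each $\tfrac1{n-1}H_P^{n-1}(y)\to h_P(y)$ by the standing hypothesis — the right side tends to $\sum_yp_{oy}h_P(y)=(Ph_P)(o)$. Hence $h_P=Ph_P$. If $h_P(x_0)=h_{\mathrm{top}}$ for some $x_0$, harmonicity together with $h_P\le h_{\mathrm{top}}$ forces $h_P(y)=h_{\mathrm{top}}$ for every $y$ with $p_{x_0y}>0$, and inductively along the chain — using that the $P$-walk is irreducible, i.e.\ $p_{xy}>0$ whenever $A_{xy}>0$, so that it reaches all of the connected graph — on all of $V(G)$; thus ``$h_P\equiv h_{\mathrm{top}}$'' is an all-or-nothing property. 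That $h_P$ can otherwise vary is seen on a connected graph carrying a low-growth ``trap'' entered with positive probability, e.g.\ a pendant ray on which the walk drifts to infinity: vertices inside the trap have strictly smaller entropy rate than vertices from which the walk still escapes to a high-growth part.

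Finally, assuming $h_P$ exists everywhere, set $M_n:=h_{\mathrm{top}}-h_P(X_n)\ge 0$ (by the upper bound). By the Markov property and $P$-harmonicity, $\mathrm{E}[M_{n+1}\mid X_0,\dots,X_n]=h_{\mathrm{top}}-(Ph_P)(X_n)=M_n$, so $(M_n)$ is a nonnegative martingale (with $\mathrm{E}[M_n]=\mathrm{E}[M_0]<\infty$) and converges a.s.\ by Doob's martingale convergence theorem; hence $h_P(X_n)$ converges a.s. I expect the one genuinely delicate point to be the ``independence of $x$'' assertion: the propagation argument truly needs the irreducibility hypothesis — in the reducible case one can put a maximal-growth region and a dead ray on the two sides of a single vertex and make the assertion fail — so this should be recorded as a standing assumption; and one must keep careful track of the hypothesis that the Ces\`aro limits defining $h_P$ exist, since that is exactly what legitimizes the finite-sum interchange in the harmonicity step.
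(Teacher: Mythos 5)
Your proof is correct and follows essentially the same route as the paper's: the Gibbs/Jensen inequality against the normalized weight measure for $H_P^n(o)\le\log W_n(o)$, the first-step recursion $H_P^n=H_P^1+PH_P^{n-1}$ for harmonicity, and the maximum principle (propagated via irreducibility of a nearest-neighbor walk on a connected graph) for the all-or-nothing property. You additionally supply the nonnegative-martingale argument for the a.s.\ convergence of $h_P(X_n)$, which the paper's written proof leaves implicit; that step is correct as you state it.
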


For graphings, we can say much more.

\begin{mtheorem} \label{thm:graphing}
Let $
\lG=(X,\mu,A )$ be an ergodic, bounded degree weighted graphing and let $\mathrm{A}_{\lG}:L^{2}(X,\mu )\rightarrow L^{2}(X,\mu )$ be its adjacency operator. Let $(G,o)$ be the random rooted graph coming from $\lG$. Then for almost all $o \in (X,\mu )$ the topological entropy $h_{\mathrm{top}}((G,o))$ exists and equals $\log\left\Vert \mathrm{A}_{\lG}\right\Vert=\log \rho_\lG$. 

If the spectrum of the Graphings adjacency verifies $\Sigma_{L^2(X)}(A_{\lG})\setminus \{\rho_G\}  \subset [-\lambda,\lambda]$ for some $\lambda<\rho_G$, then for almost all $o \in (X,\mu)$ the URW of $(G,o)$ exists a.s., and has maximal entropy walk rate 
\[
h_{U}(o)=h_{\mathrm{top}}
\]%
Moreover, if $P$ is a random walk on the graphing $\lG$ of entropy walk rate $h_{\mathrm{top}}$, with a finite stationary measure on $\nu$, then $P=\mathrm{URW}(\lG)$. 
\end{mtheorem}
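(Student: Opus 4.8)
The plan is to split the theorem into three independent assertions and handle each with a different tool. First, the existence of $h_{\mathrm{top}}((G,o))$ and its identification with $\log\rho_\lG=\log\|\mathrm{A}_\lG\|$: I would express $W_n(o)=\langle \mathbf 1, \mathrm{A}_\lG^n \mathbf 1\rangle$ pointwise, i.e. $W_n(o) = (\mathrm{A}_\lG^n \mathbf 1)(o)$ for the all-ones function $\mathbf 1 \in L^2(X,\mu)$ (using bounded degree so $\mathbf 1 \in L^2$ after normalization of $\mu$), and then invoke a subadditive/Kingman-type argument together with ergodicity. Concretely $\frac1n \log (\mathrm{A}_\lG^n\mathbf 1)(o)$ converges $\mu$-a.e. to a constant by ergodicity of the graphing; that the constant equals $\log\|\mathrm{A}_\lG\|$ follows because $\int (\mathrm{A}_\lG^n\mathbf 1)(o)\,d\mu(o) = \|\mathrm{A}_\lG^{n/2}\mathbf 1\|_2^2$-type quantities grow like $\|\mathrm{A}_\lG\|^{2n}$ (spectral theorem applied to the self-adjoint $\mathrm{A}_\lG$ and the vector $\mathbf 1$, whose spectral measure has $\|\mathrm{A}_\lG\|$ in its support — this last point needs $\mathbf 1$ not to be orthogonal to the top of the spectrum, which under the spectral gap hypothesis is automatic since the top eigenspace pairs with $\mathbf 1$, and in general follows from a separate argument that the norm of $\mathrm{A}_\lG$ is attained on the orbit of a.e. vertex). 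Some care is needed here: a.e.\ pointwise growth rate equal to the $L^2$ exponential growth rate requires an $L^1$–$L^\infty$ interpolation or a second-moment bound on $W_n$, which I expect to be one of the more delicate routine steps.

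Second, under the spectral gap hypothesis $\Sigma_{L^2(X)}(\mathrm{A}_\lG)\setminus\{\rho_\lG\}\subset[-\lambda,\lambda]$, I would produce the principal eigenfunction $F\in L^2(X,\mu)$ with $\mathrm{A}_\lG F = \rho_\lG F$, and — crucially — show $F>0$ a.e.\ and bounded away from $0$ and $\infty$ on the essential support, so that the Doob transform $u_{xy}=A_{xy}F(y)/(\rho_\lG F(x))$ defines a genuine random walk on a.e.\ leaf. Positivity of $F$ follows from a Perron–Frobenius argument for graphings (ergodicity plus the isolated top of the spectrum forces the eigenspace to be one-dimensional and spanned by a nonnegative function; ergodicity then upgrades nonnegative to a.e.\ positive). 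Then for a.e.\ root $o$, the spectral gap gives $\mathrm{A}_\lG^n\mathbf 1 = \langle \mathbf 1, \hat F\rangle \rho_\lG^n \hat F + O(\lambda^n)$ in $L^2$, and transferring this to pointwise convergence along the orbit (again via bounded degree and a maximal/ergodic argument) yields $W_n(o)/\rho_\lG^n \to c(o)F(o)$ with $c(o)>0$ a.e. Plugging into the URW formula \eqref{eq:defURWn} gives $u_{xy}=\lim_n A_{xy}W_{n-1}(y)/W_n(x) = A_{xy}F(y)/(\rho_\lG F(x))$, so the URW exists and is the Doob transform of $F$. Computing $h_U(o)$: the walk entropy of length $n$ telescopes, since $p(w)/a(w) = F(x_n)/(\rho_\lG^n F(x_0))$, so $-\log(p(w)/a(w)) = n\log\rho_\lG + \log F(x_0) - \log F(x_n)$; hence $H^n_U(o) = n\log\rho_\lG + \log F(o) - \mathbb E_o[\log F(X_n)]$, and dividing by $n$ the boundary terms vanish (using that $\log F$ is integrable, so the stationary/Cesàro averages of $\mathbb E_o[\log F(X_n)]$ stay $o(n)$), giving $h_U(o)=\log\rho_\lG = h_{\mathrm{top}}$.

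Third, the uniqueness statement: let $P=(p_{xy})$ be any random walk on the graphing with a finite stationary measure $\nu$ and $h_P(o)=h_{\mathrm{top}}$ on a positive-measure (hence, by the invariance in Proposition \ref{prp:entropies}, a.e.) set of roots. The plan is a relative-entropy / Gibbs-inequality argument. Write the one-step entropy production: for the $P$-walk, $-\frac1n\sum_w p(w)\log\frac{p(w)}{a(w)}$ can be re-summed using stationarity of $\nu$ as an expectation over a single edge, $\mathbb E_\nu\big[\sum_y p_{xy}\log\frac{a_{xy}}{p_{xy}}\big]$. Comparing against the URW kernel $u_{xy}=a_{xy}F(y)/(\rho_\lG F(x))$, the difference $h_{\mathrm{top}} - h_P$ equals an averaged Kullback–Leibler divergence $\mathbb E_\nu\big[\sum_y p_{xy}\log\frac{p_{xy}}{u_{xy}}\big]$ plus a term $\mathbb E_\nu[\log F(X_1) - \log F(X_0)]$ which vanishes by $\nu$-stationarity provided $\log F\in L^1(\nu)$ (this integrability is the point where I expect to need an extra argument — possibly bounding $F$ using the bounded-degree structure, or truncating $F$ and passing to the limit). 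Since KL divergence is nonnegative and vanishes only when $p_{xy}=u_{xy}$ for $\nu$-a.e.\ $x$ and all neighbors $y$, the hypothesis $h_P=h_{\mathrm{top}}$ forces $P=\mathrm{URW}(\lG)$ on the support of $\nu$; ergodicity and irreducibility of the graphing then extend this to a.e.\ leaf. The main obstacle throughout is controlling $\log F$ — its integrability against $\nu$ and the passage from $L^2$-spectral estimates to almost-everywhere pointwise statements about $W_n(o)$ — rather than the algebra of the entropy computation, which is essentially a telescoping identity.
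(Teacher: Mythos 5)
Your proposal is correct and follows essentially the same route as the paper: existence of the walk growth plus the Gelfand-type formula $\lVert \mathrm{A}_{\lG}\rVert=\lim_n\lVert \mathrm{A}_{\lG}^n\mathbf{1}_X\rVert^{1/n}$ for the first part; the spectral projection, positivity of $F$ by ergodicity, the Doob transform, and the stationary measure $\pi=F^2\mu$ for the second; and the averaged Kullback--Leibler rigidity against the finite stationary measure $\nu$, extended by ergodicity, for the uniqueness. The two delicate steps you flag are handled in the paper exactly as you anticipate: the pointwise convergence of $\rho_\lG^{-n}W_n$ follows from the exponential spectral-gap estimate plus Borel--Cantelli, and the vanishing of the $\log F$ boundary/drift term follows from stationarity of $\pi$ and Birkhoff's theorem applied to the increments $\log F(X_i)-\log F(X_{i-1})$.
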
 
Note that by a finite stationary measure on a graphing, we mean a measure that is absolutely continuous to $\mu$. Otherwise, from the point of view of the graphing, it is not visible. Theorem \ref{thm:graphing} directly applies to connected finite graphs and gives back the original theorem on MERW. The existence of $h_{\mathrm{top}}$ follows from a recent result of the first author, Fraczyk and Hayes on walk growth of unimodular random rooted graphs \cite{abert2024co}. 

Note that the unique maximizer part of Theorem \ref{thm:graphing} only holds for random walks on the full \emph{graphing} and not for random walks on the \emph{leaves}. Indeed one can perturb e.g. the SRW on the $d$-regular tree at a set of vertices of density $0$ without changing its (maximal) walk entropy rate $\log(d)$. As we will see later, environmental limits suggest a way back to uniqueness. 

We will now connect Theorem \ref{thm:mambrane} and Theorem \ref{thm:graphing} via the following.  

\begin{mtheorem} \label{thm:loopproc}
Let $G$ be a unimodular vertex transitive $d$-regular graph and $\omega $ an $\mathrm{Aut}(G)$-invariant random subset of $V(G)$, with global spectral radius $\rho _{g}$. Then for all $\sigma < d-\rho _{g}$, the adjacency operator on the graphing $\lG+\sigma \lV$ has spectral gap.

The random leaves $(G+\sigma V_{\omega },o)$ admit a URW a.s. that is the unique entropy maximizer among random walks on the graphing $\lG+\sigma \lV$ with a finite stationary measure. 
\end{mtheorem}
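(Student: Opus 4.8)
Granting Theorem \ref{thm:graphing}, the whole of Theorem \ref{thm:loopproc} reduces to a single spectral fact: the adjacency operator of the graphing $\lG+\sigma\lV$ has a gap at the top of its spectrum whenever $\sigma<d-\rho_g$. So the plan is to prove this by a short ``membrane estimate'' and then read off the remaining assertions from Theorem \ref{thm:graphing}. Represent $\lG$ as the graphing on the space $(X,\mu)$ of rooted, $\omega$-decorated copies of $G$ with its unimodular probability measure (passing, if the law of $\omega$ is not $\mathrm{Aut}(G)$-ergodic, to an ergodic component --- which keeps the graphing $d$-regular and can only decrease $\rho_g$ --- so that we may assume $\lG$ ergodic). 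The adjacency operator of $\lG+\sigma\lV$ is then the bounded self-adjoint operator $T=A_{\lG}+\sigma D$, where $D$ is multiplication by the indicator $\mathbf{1}[o\in\omega]\in L^\infty(X,\mu)$, so $0\le\sigma D\le\sigma I$. Since $G$ is $d$-regular, $A_{\lG}\mathbf{1}=d\mathbf{1}$; hence $\|A_{\lG}\|=d=\rho_{\lG}$ with $\mathbf{1}$ the (by ergodicity, unique) principal eigenfunction, and $\rho_g$ equals the norm of $A_{\lG}$ restricted to $\mathbf{1}^\perp$, so $\Sigma_{L^2(X)}(A_{\lG})\setminus\{d\}\subseteq[-\rho_g,\rho_g]$ and therefore $\Sigma_{L^2(X)}(T)\subseteq[-\rho_g,\,d+\sigma]$.

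The membrane estimate consists of two inequalities. First, positivity of the perturbation gives the lower bound $\rho:=\|T\|=\sup\Sigma_{L^2(X)}(T)\ge\langle T\mathbf{1},\mathbf{1}\rangle=d+\sigma p\ge d$, where $p:=\mu(o\in\omega)\in[0,1]$ is the probability that the root carries a loop. Second, evaluating the Courant--Fischer formula for the second spectral value $\mu_2(T)$ on the test vector $\mathbf{1}$, and using that $\mathbf{1}^\perp$ is $A_{\lG}$-invariant together with $0\le D\le I$, gives the upper bound $\mu_2(T)\le\sup\{\langle Tg,g\rangle:\|g\|=1,\ g\perp\mathbf{1}\}\le\rho_g+\sigma$. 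Since $0\le\sigma<d-\rho_g$ and $p\in[0,1]$, we have $\sigma(1-p)\le\sigma<d-\rho_g$, i.e.\ $d+\sigma p>\rho_g+\sigma$, and hence $\rho>\mu_2(T)$. Consequently $\rho$ is an isolated point of $\Sigma_{L^2(X)}(T)$ --- so an eigenvalue, of multiplicity one --- with $\Sigma_{L^2(X)}(T)\setminus\{\rho\}\subseteq[-\rho_g,\rho_g+\sigma]$ and $\rho_g+\sigma<\rho$. This is exactly the spectral gap hypothesis of Theorem \ref{thm:graphing}, with $\lambda=\rho_g+\sigma$. Morally, the leafwise noise $\sigma D$ displaces the principal eigenfunction $\mathbf{1}$ off its eigenvalue $d$ by at most $\sigma$ along $\mathbf{1}^\perp$, while pushing the extreme top of the spectrum upward rather than downward --- so the atom at the top survives right up to $\sigma=d-\rho_g$.

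The rest is bookkeeping. The graphing $\lG+\sigma\lV$ is ergodic, of bounded degree, and by the above its adjacency operator has spectral gap with top eigenvalue $\rho$, so $h_{\mathrm{top}}=\log\|T\|=\log\rho$. Theorem \ref{thm:graphing} then gives, for $\mu$-a.e.\ root $o$ (equivalently, for a.e.\ realisation of $\omega$), that the URW of the leaf $G+\sigma V_\omega$ exists a.s., coincides with the Doob transform of the positive principal $L^2(X,\mu)$-eigenfunction $F$ of $T$, and has maximal walk entropy rate $h_U(o)=h_{\mathrm{top}}$; and that any random walk on the graphing $\lG+\sigma\lV$ of walk entropy rate $h_{\mathrm{top}}$ carrying a finite stationary measure equals $\mathrm{URW}(\lG+\sigma\lV)$. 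This is precisely the claimed statement.

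The one step requiring a genuine idea is the sharp threshold $\sigma<d-\rho_g$ in the membrane estimate: a generic bounded self-adjoint perturbation of norm $\sigma$ keeps a spectral gap of width $d-\rho_g$ only for $\sigma<(d-\rho_g)/2$, and recovering the full range crucially uses $\sigma D\ge 0$, which pins $\langle T\mathbf{1},\mathbf{1}\rangle\ge d$ so that the top of the spectrum cannot descend to meet $\mu_2(T)$. A secondary, routine point is to check that the bottom of $\Sigma_{L^2(X)}(T)$ stays away from $-\rho$ (no ``bipartite'' obstruction to the gap at the top), which is automatic here since $\inf\Sigma_{L^2(X)}(T)\ge\inf\Sigma_{L^2(X)}(A_{\lG})\ge-\rho_g>-\rho$.
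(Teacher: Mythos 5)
Your proof is correct, and for the one substantive step --- the spectral gap of $A_{\lG}+\sigma A_{\lV}$ for the full range $\sigma<d-\rho_g$ --- you take a genuinely different route from the paper. The paper expands the resolvent of $A_{\lH_\sigma}$ in a Neumann (Born) series on the contour $|z-d|=(d-\rho_g)/2$, which only works for $\sigma<(d-\rho_g)/2$, uses constancy of the rank of the Riesz projection to isolate a simple top eigenvalue, and then bootstraps: since the perturbation is positive, $\|A_{\lH_{\sigma_1}}\|\ge d$, so the gap persists at $\sigma_1$ slightly below $(d-\rho_g)/2$ and the contour argument can be re-centred at $\|A_{\lH_{\sigma_1}}\|$ and iterated up to $\sigma<d-\rho_g$. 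You instead get the full range in one shot from the min--max principle: $\sup\Sigma(T)\ge\langle T\mathbf 1,\mathbf 1\rangle\ge d$ by positivity of $\sigma D$, while the second min--max value is at most $\sup_{g\perp\mathbf 1,\|g\|=1}\langle Tg,g\rangle\le\rho_g+\sigma<d$ since $\mathbf 1^\perp$ is $A_{\lG}$-invariant and $0\le D\le I$; hence the top of the spectrum is a simple isolated eigenvalue with $\Sigma(T)\setminus\{\rho\}\subset[-\rho_g,\rho_g+\sigma]$, which is exactly the hypothesis of Theorem~\ref{thm:graphing} with $\lambda=\rho_g+\sigma$. Both arguments use positivity of the perturbation in an essential way to beat the generic threshold $(d-\rho_g)/2$; yours is shorter and avoids contour integrals and analytic perturbation theory, while the paper's Riesz-projection method additionally tracks the analytic dependence of the eigenprojection on $\sigma$, which is reused in the proof of Theorem~\ref{thm:mambrane}. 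Your closing checks (that $\|T\|$ is attained at the top rather than the bottom of the spectrum, and the reduction to an ergodic component --- which is needed anyway for the hypothesis $\sigma<d-\rho_g$ to be non-vacuous and is implicit in the paper) are the right ones.
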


Theorem \ref{thm:loopproc} is trivial when $G$ is amenable or when the invariant process $\omega $ does
not have spectral gap. When $\omega $ is a (factor of) i.i.d. process on $G$%
, we have $d-\rho _{g}=d(1-\rho )$ where $\rho $ is the spectral radius of
simple random walk on $G$, so our result applies for a wide range of $\sigma$. 

It is easy to produce examples of countable graphs and random walks on it where the topological entropy, the walk entropy rate or the URW do not exist. Even when assuming they all do, it is not clear whether the URW always has maximal entropy $h_{\mathrm{top}}$. What we can show is that the URW, if exists, is a special type of walk that is uniform on bridges.  

\begin{definition}
Let $G$ be a connected, unweighted graph. A \emph{Doob walk on }$G$ is a
random walk $P=(p_{xy})$ on $G$ such that for all $x,y\in V(G)$ and $k>0$,
and walks $w_{1},w_{2}$ of length $k$ from $x$ to $y$, we have $%
p(w_{1})=p(w_{2})$. The \emph{energy} of a Doob walk is $\rho = 1/\sqrt{p_{xy}p_{yx}%
}$ where $(x,y)\in E(G)$. 
When $G$ is weighted, with adjacency matrix $A_{xy}$, we extend the definitions as follows: we have 
\[
\frac{p(w_{1})}{a(w_1)}=\frac{p(w_{2})}{a(w_2)} 
\]
for walks $w_1,w_2$ as above and the energy is defined as 
\[
\rho = \sqrt{A_{xy}A_{yx}/p_{xy}p_{yx}}
\]
Let $\mathrm{Doob}(G,\rho )$ denote the space of
Doob walks on $G$ with energy $\rho$.
\end{definition} 

It is easy to check that the energy of a Doob walk does not depend on the
edge. Note that a Doob walk of energy $\rho $ on a \emph{tree} is just a
walk with $p_{xy}p_{yx}=1/\rho ^{2}$ for all edges $(x,y)$. The space $%
\mathrm{Doob}(G,\rho )$ is compact under pointwise convergence. In \cite{thibaut2024maximum} it is
shown that fixing a root $o$, Doob walks of energy $\rho $ are exactly the
Doob transforms of positive adjacency $\rho $-eigenfunctions, normalized to
have value $1$ at $o$ (see also Proposition ()).

\begin{proposition} \label{propdoob}
Let $G$ be a connected, weighted, bounded degree graph. If the URW of $G$ exists, it is a
Doob walk of energy $\exp (h_{\mathrm{top}}(G))$. 
\end{proposition}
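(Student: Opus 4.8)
The plan is to work directly from the limit formula (\ref{eq:defURWn}) for the URW kernel, namely $u_{xy} = \lim_{n\to\infty} A_{xy} W_{n-1}(y)/W_{n}(x)$, where $W_n(x) = \langle \mathbf{1}_x, A^n \mathbf{1}_V\rangle$ and the limit is assumed to exist because URW exists. The first step is to show the two required properties of a Doob walk of energy $\rho = \exp(h_{\mathrm{top}})$: (i) for any two walks $w_1, w_2$ of length $k$ from $x$ to $y$ one has $p(w_1)/a(w_1) = p(w_2)/a(w_2)$, and (ii) for every edge $(x,y)$, $A_{xy}A_{yx}/(p_{xy}p_{yx}) = \rho^2$.

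For (i), I would telescope the product defining $p(w)/a(w)$ along a walk $w = (z_0, \ldots, z_k)$. Writing $u_{z_{i-1}z_i}/A_{z_{i-1}z_i} = \lim_n W_{n-1}(z_i)/W_n(z_{i-1})$ — the ratio of consecutive walk counts, call it $r(z_{i-1}, z_i)$ once we know the limit exists — the product $\prod_{i=1}^k r(z_{i-1}, z_i)$ looks like it should telescope, but the shift in the index $n \mapsto n-1$ means it does not telescope exactly at finite $n$. The key observation is that $h_{\mathrm{top}} = \lim_n \tfrac1n \log W_n(o)$ exists (this is part of the hypothesis, via Proposition \ref{prp:entropies}, or can be extracted from convergence of the ratios), so $W_{n-1}(z)/W_n(z) \to e^{-h_{\mathrm{top}}} = 1/\rho$ for every vertex $z$. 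Hence we can factor $r(z_{i-1},z_i) = \lim_n \big[W_n(z_i)/W_n(z_{i-1})\big]\cdot\big[W_{n-1}(z_i)/W_n(z_i)\big]$; set $\phi(z) := \lim_n W_n(z)/W_n(o)$ (a positive function, well-defined once all the ratios converge), and then $r(z_{i-1},z_i) = \phi(z_i)/\phi(z_{i-1}) \cdot \rho^{-1}$. Therefore $p(w)/a(w) = \prod_{i=1}^k r(z_{i-1},z_i) = \rho^{-k}\, \phi(y)/\phi(x)$, which depends only on $x,y,k$ and not on the intermediate vertices. This gives (i), and also identifies $p_{xy} = A_{xy}\,\phi(y)/(\rho\,\phi(x))$, i.e. the URW is literally the Doob transform of the positive function $\phi$ with respect to the eigenvalue $\rho$ — and as a byproduct $A\phi = \rho\,\phi$, so $\phi$ is a genuine positive $\rho$-eigenfunction. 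For (ii), plug into the energy formula: $A_{xy}A_{yx}/(p_{xy}p_{yx}) = A_{xy}A_{yx}\big/\big(A_{xy}\tfrac{\phi(y)}{\rho\phi(x)}\cdot A_{yx}\tfrac{\phi(x)}{\rho\phi(y)}\big) = \rho^2$, which is exactly energy $\rho = \exp(h_{\mathrm{top}})$, independent of the edge.

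The main obstacle — the part that needs care rather than routine manipulation — is the interchange of the pointwise limit defining $u_{xy}$ with the finite product over the walk, and in particular establishing that the vertexwise ratios $W_n(z)/W_n(o)$ and $W_{n-1}(z)/W_n(z)$ converge separately, not merely the specific combinations appearing in $u_{xy}$. The hypothesis only hands us convergence of $A_{xy}W_{n-1}(y)/W_n(x)$ for edges $(x,y)$. To bootstrap from this to convergence of $\phi(z) = \lim_n W_n(z)/W_n(o)$ and of $W_{n-1}(z)/W_n(z)$, I would argue as follows: the connectedness of $G$ lets us compose edge-ratios along a fixed path from $o$ to $z$, and a two-step relation $W_{n+1}(x) = \sum_{y\sim x} A_{xy} W_n(y)$ combined with the convergence of every $u_{xy}$ forces, by a diagonal/subsequence and monotonicity argument (or by directly invoking that the existence of URW in Definition \ref{def:defURWn} as a weak limit of trajectory measures already encodes convergence of all finite-dimensional marginals, hence of all the ratios $W_{n-k}(z)/W_n(o)$ simultaneously for fixed $k$), that $W_{n-k}(z)/W_n(o) \to \rho^{-k}\phi(z)$ for every fixed $k \geq 0$ and every vertex $z$. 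Bounded degree is used to control the sums and to ensure $\rho = \|\mathrm{A}\|$ is finite so that $h_{\mathrm{top}} < \infty$. Once this convergence package is in place, steps (i) and (ii) above are immediate, and the proposition follows.
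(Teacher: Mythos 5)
Your overall strategy (telescope the product of ratio limits along the walk) is the same as the paper's, but your execution has a real hole at exactly the step you flag as the main obstacle, and neither of your proposed repairs closes it. First, the claim that $W_{n-1}(z)/W_n(z)\to e^{-h_{\mathrm{top}}}$ ``because $h_{\mathrm{top}}=\lim_n\tfrac1n\log W_n$ exists'' is false as stated: convergence of $\sqrt[n]{W_n}$ does not imply convergence of the ratio $W_{n-1}/W_n$; the implication only goes the other way, and a priori the ratio could oscillate. Second, the separate limits $\phi(z)=\lim_n W_n(z)/W_n(o)$ are not among the finite-dimensional marginals of the uniform trajectory measures: weak convergence of those measures gives exactly the combined limits $\lim_n a(w)\,W_{n-k}(z)/W_n(o)$ for walks $w$ of length $k\ge 1$ from $o$ to $z$, never the shift-free ratio $W_n(z)/W_n(o)$. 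So the factorization $r(z_{i-1},z_i)=\rho^{-1}\phi(z_i)/\phi(z_{i-1})$ --- and with it the assertion that $\phi$ is a positive $\rho$-eigenfunction and the URW its Doob transform --- is unproven; it is in fact a strictly stronger statement than the proposition and is not needed.

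The fix is a one-line reindexing that makes the telescoping exact and avoids all separate limits. Since $u_{x_{j-1}x_j}=\lim_n A_{x_{j-1}x_j}W_{n-1}(x_j)/W_n(x_{j-1})$ exists, the same limit is obtained along the shifted sequence $n\mapsto n-j+1$; hence
\[
\frac{u(w)}{a(w)}=\prod_{j=1}^{k}\lim_{n\to\infty}\frac{W_{n-j}(x_j)}{W_{n-j+1}(x_{j-1})}
=\lim_{n\to\infty}\frac{W_{n-k}(x_k)}{W_{n}(x_0)},
\]
because the finite-$n$ product now telescopes exactly. The right-hand side depends only on the endpoints and on $k$, which is precisely the Doob-walk property. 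For the energy, take $k=2$ and the closed walk $x\to y\to x$: the limit $c:=\lim_n W_{n-2}(x)/W_n(x)$ then exists and is positive, and a Ces\`aro argument on $\tfrac{1}{2n}\log W_{2n}(x)$ forces $c=\rho^{-2}$ with $\rho=\exp(h_{\mathrm{top}})$, so the energy is $\sqrt{A_{xy}A_{yx}/(u_{xy}u_{yx})}=c^{-1/2}=\rho$. No eigenfunction representation is required.
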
 

For a finite graph, this again gives the old result that the Kolmogorov-Sinai
entropy of the MERW\ equals the log of the adjacency norm. 

Doob walks of energy $d$ give a large family of random walks on $T_d$ of maximal walk entropy rate $\log(d)$. In fact, one can get a full interval of walk entropy rates by varying the hitting measure. We argue that this shows that in the non-amenable setting it would not be a natural idea to call all these walks MERW-s, as \cite{thibaut2024maximum} suggests in their setting. 

\smallbreak


\noindent \textbf{Remark.} It makes sense to ask why we give a new name (Doob walk) to something that is just the Doob transform (or h-transform) of a positive eigenfunction. While on individual discrete graphs this is true, on \emph{graphings} it fails in general, as there are Doob walks on graphings that can not be integrated out to eigenfunctions on the graphing. A core suggestion of this paper is that considering limits of Doob walks instead of limits of eigenfunctions gives a better insight. 

\smallbreak 

Until now, with the exception of Theorem \ref{thm:rankone}, our results dealt with the \emph{delocalized phase}, that is, when a finite stationary measure exists. Anderson localization (see e.g. \cite{andersonsur})
trained us to expect that for large enough noise,
strong localization should happen and the URW (or at least it having finite stationary measure) should stop to exist.

\begin{question}
Does there exist $\sigma >0$ and $0<p<1$ such that $T_{d}+\sigma V_{\omega }$
($\omega $ is $(p,1-p)$ i.i.d) does not have URW, a.s.? If it exists, does it have a finite stationary measure? 
\end{question}

In its current form, the loop model and its URW at $\sigma = \infty $ do not have a straightforward definition, but looking at it as a killed process gives one a solid infinite $\sigma $ version, with hard i.i.d. obstacles. This
leads to the following.

\begin{question}
Let $T$ be a bounded degree, infinite Galton-Watson tree. Does $T$ have URW,
a.s.?
\end{question}

Note that it is not true that every ergodic bounded degree graphing has URW. E.g. $\mathbb{Z}+V_{\omega }$ ($\omega$ is i.i.d.) is a counterexample, see the end of the introduction.


We first demonstrate the localized phase on a natural example: the \emph{canopy tree} $CT_d$. A convenient visualization is to fix a boundary point $\xi \in \partial T_d$ of the regular tree and consider the subgraph induced by the horoball centered at $\xi$:
\[
CT_d = \{ x \in T_d : b_{\xi}(x) < 0 \},
\]
where $b_{\xi}$ is the Busemann function (see Section~\ref{sec:canopy}). For $x \in V(CT_d)$, let $|x|$ denote the minimal distance from $x$ to a leaf. The group $\mathrm{Aut}(CT_d)$ acts transitively on each level set $\{x : |x| = k\}$. Assigning the root to level $|x| = k$ with probability proportional to $(d-1)^{-k}$ makes $(CT_d, x)$ a unimodular random rooted graph, which is the Benjamini--Schramm limit of balls in $T_d$.

Before stating the result, we need to define the \emph{environmental limit}, that is, the environment seen by the particle. See \cite{env1, env2, env3} for previous literature. 

\begin{definition}\label{def:envlimit}
Let $(G,P,o)$ be a bounded degree rooted graph with a random walk. Let $P^n(o,\dd x)$ be the endpoint measure of the $P$-random walk starting at $o$
and let 
\[
\nu _{n}^{P}(o, \dd x)=\frac{1}{n}\sum_{i=1}^{n}P^i(o,\dd x)\text{.}
\]%
The environmental limit of $G$ wrt $P$ from $o$ is  
\[
\lim_{n\rightarrow \infty }(G,P,x_n) = (G^\prime,P^\prime,o^\prime)
\]%
where the root $x_n$ has law $\nu_{n}^P(o,\dd x)$. 
\end{definition}

That is, we root $(G,P)$ at a $\nu_{n}^P(o,\dd x)$-random root and take the weak limit of rooted graphs + random walks. The limiting random rooted graph is not necessarily unimodular, but it is $P^\prime$-stationary. It turns out that the environmental limit controls the existence of a finite stationary measure and when no such measure exists, serves as a substitute. 

\begin{proposition}\label{prop:envlimit}
Let $G=(X,\mu, A)$ be a bounded degree graphing with a random walk $P$, such that the environmental limit $(G^{\prime },P^{\prime },x)$ of $P$ starting at $x \in (X,\mu)$ exists a.s. and is independent of $x$. Then the walk entropy rate $h_{P}=h_{P}(x)
$ is independent of $x$ and equals $E_{(G^{\prime },P^{\prime },x))}H_{P^{\prime
}}^{1}(x)$, the expected one-step entropy production of $P^{\prime }$ in $G^{\prime }$.
Moreover, if $X$ admits a $P$-stationary probability measure $\lambda$ that is absolutely continuous to $\mu$, then $(G^{\prime },P^{\prime },x)$ equals $(G,P,o)$ where $o \in (X,\lambda)$, that is, the law of $x$ is $\lambda$. 
\end{proposition}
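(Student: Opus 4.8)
The plan is to reduce the walk entropy rate to a Cesàro average of a single local observable taken along the walk, and then to extract both assertions from weak convergence of rooted graphs carrying a random walk.

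First I would establish the telescoping identity for the walk entropy. Writing $\log(p(w)/a(w))$ as a sum over the edges of a walk $w=(x_0,\ldots,x_n)$ and then marginalizing the walk over all coordinates except one edge $(x_{i-1},x_i)$ (repeatedly using $\sum_z p_{yz}=1$), one gets
\[
H_P^n(o) \;=\; \sum_{i=0}^{n-1} \int g(x)\, P^i(o,\dd x), \qquad g(y) := H_P^1(y) = -\sum_z p_{yz}\log\frac{p_{yz}}{A_{yz}} .
\]
Under the standing boundedness assumptions on the graphing, $g$ is bounded, and, regarded as a functional $(H,Q,v)\mapsto H_Q^1(v)$ on rooted weighted graphs carrying a random walk, it depends only on the $1$-ball around $v$ and is continuous (the convention $0\log 0=0$ makes $t\mapsto -t\log(t/a)$ continuous on $[0,\infty)$ for $a$ bounded away from $0$ and $\infty$). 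Dividing by $n$ and comparing $\frac1n\sum_{i=0}^{n-1}P^i(o,\cdot)$ with $\nu_n^P(o,\dd x)=\frac1n\sum_{i=1}^n P^i(o,\dd x)$ — these differ by $\frac1n\bigl(g(o)-\int g\,P^n(o,\dd x)\bigr)=O(1/n)$ — gives
\[
\tfrac1n H_P^n(o) \;=\; \int g(x)\,\nu_n^P(o,\dd x) + O(1/n) \;=\; E\bigl[\,H_P^1(x_n)\,\bigr] + O(1/n),
\]
where $x_n$ has law $\nu_n^P(o,\dd x)$.

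For the first conclusion I would then invoke the hypothesis: for $\mu$-a.e.\ $o$ one has $(G,P,x_n)\to(G^\prime,P^\prime,x)$ weakly, with a limit independent of $o$. Applying this to the bounded continuous functional ``one-step entropy at the root'', the displayed right-hand side converges to $E_{(G^\prime,P^\prime,x)}[H_{P^\prime}^1(x)]$; hence the limit defining $h_P(o)$ exists, equals this quantity, and is independent of $o$. For the second conclusion, suppose $\lambda\ll\mu$ is $P$-stationary, so $\lambda P^i=\lambda$ and therefore $\int \nu_n^P(x,\cdot)\,\dd\lambda(x)=\lambda$ for every $n$. Fix a bounded continuous test function $\phi$. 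For $\lambda$-a.e.\ $x$ (these are $\mu$-a.e.\ points, since $\lambda\ll\mu$), $\int \phi(G,P,y)\,\nu_n^P(x,\dd y)\to E_{(G^\prime,P^\prime,x)}[\phi]$ as $n\to\infty$; since this quantity is bounded by $\|\phi\|_\infty$, dominated convergence together with Fubini gives
\[
E_{o\sim\lambda}\bigl[\phi(G,P,o)\bigr] \;=\; \int\!\!\int \phi(G,P,y)\,\nu_n^P(x,\dd y)\,\dd\lambda(x) \;\longrightarrow\; E_{(G^\prime,P^\prime,x)}[\phi],
\]
while the middle quantity is independent of $n$ and equals $E_{o\sim\lambda}[\phi(G,P,o)]$. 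Thus $(G,P,o)$ with $o\in(X,\lambda)$ and $(G^\prime,P^\prime,x)$ agree against every bounded continuous test function, hence have the same law.

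I expect the only genuine work to be the telescoping identity; everything afterward is soft. The secondary technical points are (i) checking that $H_P^1$ is a bounded continuous functional on the space of rooted weighted graphs with a random walk — this uses bounded degree, boundedness of edge weights above and below, and the continuity of $t\mapsto t\log t$ at $0$ — and (ii) justifying, in the second part, the Fubini/dominated-convergence exchange of ``average over the stationary measure'' with ``take the Benjamini--Schramm limit'', which is legitimate precisely because the test functions are bounded and the endpoint kernels $x\mapsto\nu_n^P(x,\cdot)$ depend measurably on the starting point.
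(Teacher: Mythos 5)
Your proposal is correct and follows essentially the same route as the paper: decompose $H_P^n(o)$ as a Cesàro sum of the bounded, local one-step entropy functional along the walk, pass to the limit using weak convergence to the environmental limit, and in the second part use $P$-stationarity of $\lambda$ to identify the environmental limit with $(G,P,o)$, $o\sim\lambda$. Your treatment is in fact slightly more careful than the paper's on two minor points — the $O(1/n)$ shift between $\frac1n\sum_{i=0}^{n-1}P^i$ and $\nu_n^P$, and the use of the Cesàro-averaged root law rather than the $n$-th step law in the stationary case — but these do not constitute a different argument.
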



It turns out that while simple random walk is recurrent on $%
CT_{d}$, the URW is \emph{strongly transient}, that is, for the URW-random
trajectory $(X_{n})$, we have  $\lvert X_{n}\rvert  \rightarrow
\infty $, a.s. 

\begin{mtheorem} \label{thm:canopy}
The URW exists for the canopy tree $CT_{d}$ and is strongly
transient. Projected on the levels, this random walk is given by 
$$ u_{l,l \pm 1}= \frac{l \pm 1}{2l},\qquad l= 1,2,\ldots$$
We have 
\[
h_{U}=h_{\mathrm{top}}(CT_{d})=\log (2\sqrt{d-1})
\]%
that is, the $\textmd{URW}$\ has maximal walk entropy rate. The environmental limit of $CT_d$
with respect to the $\textmd{URW}$ equals the unique Doob walk on $T_{d}$ of energy $2\sqrt{d-1}$ which picks a fixed boundary point, and goes towards it with probability $1/2$. In particular, $CT_d$ does not admit an absolutely continuous finite stationary measure wrt its URW.
\end{mtheorem}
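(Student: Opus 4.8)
The plan is to reduce the whole statement to the large-$n$ asymptotics of walk counts on $CT_d$, which I would obtain from an explicit generating function. Since $\mathrm{Aut}(CT_d)$ is transitive on each level, $W_n(x)$ depends only on the level of $x$; write $W_n(x)=w_n(l)$, where $l\ge 1$ and the leaves form level $1$. A length-$n$ walk from a level-$l$ vertex first steps to its unique parent or to one of its $d-1$ children, so $w_n(l)=w_{n-1}(l+1)+(d-1)w_{n-1}(l-1)$ for $l\ge 2$, while $w_n(1)=w_{n-1}(2)$ and $w_0(l)=1$. The substitution $w_n(l)=(d-1)^{l/2}u_n(l)$ symmetrizes the bulk recursion to $u_n(l)=\sqrt{d-1}\,\bigl(u_{n-1}(l+1)+u_{n-1}(l-1)\bigr)$, exhibiting $u_n$ as $(2\sqrt{d-1})^{n}$ times the $n$-th iterate of simple random walk on $\{1,2,\dots\}$ killed at $0$, applied to the profile $(d-1)^{-l/2}$. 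By \eqref{eq:defURWn} the URW is governed entirely by the limits $\lim_n w_{n-1}(l')/w_n(l)$ for $|l-l'|=1$, so everything hinges on the asymptotics of $w_n(l)$.

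I would compute $G_l(z)=\sum_{n\ge 0}w_n(l)z^n$. The $l$-recursion has general solution $\tfrac{1}{1-dz}+A\,\mu_+(z)^l+B\,\mu_-(z)^l$, where $\mu_\pm$ are the roots of $z\mu^2-\mu+(d-1)z=0$; analyticity of $G_l$ at $z=0$ forces $A=0$ (since $\mu_+(z)\sim z^{-1}$), and the boundary relation $G_1=1+zG_2$ fixes $B$, giving
\[
G_l(z)=\frac{1-\mu_-(z)^{l}}{1-dz},\qquad \mu_-(z)=\frac{1-\sqrt{1-4(d-1)z^{2}}}{2z}.
\]
The apparent pole at $z=1/d$ is removable because $\mu_-(1/d)=1$; hence the dominant singularities are the square-root branch points $z=\pm z_c$ with $z_c=\tfrac{1}{2\sqrt{d-1}}$, and singularity analysis (the transfer theorem for $\sqrt{1-z/z_c}$, carried out at both branch points) yields
\[
w_n(l)=(d-1)^{l/2}\,l\,\bigl(2\sqrt{d-1}\bigr)^{n}\,n^{-3/2}\,\bigl(c_{(n+l)\bmod 2}+o(1)\bigr)
\]
with $c_0,c_1>0$ independent of $n$ and $l$. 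In particular $h_{\mathrm{top}}(CT_d)=\log(2\sqrt{d-1})$, consistent with Theorem~\ref{thm:graphing}. Since $w_{n-1}(l\pm 1)$ and $w_n(l)$ lie in the same $(n+l)$-parity class, the constants cancel in the ratio and $\lim_n w_{n-1}(l\pm1)/w_n(l)=(d-1)^{\pm 1/2}\tfrac{l\pm1}{l}\,z_c$. By \eqref{eq:defURWn} the URW thus steps from a level-$l$ vertex to its parent with probability $\tfrac{l+1}{2l}$ and to each of its $d-1$ children with probability $\tfrac{l-1}{2(d-1)l}$; these sum to $1$ (as they must, since $\sum_{y\sim x}W_{n-1}(y)=W_n(x)$ for every $n$), so $U$ is a bona fide transition kernel, the $k$-step marginals of the uniform length-$n$ walk telescope into products of one-step ratios and hence converge weakly, so the URW exists with the stated level projection (a legitimate lumping of $U$ by symmetry).

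Strong transience is then immediate: the level chain is birth--death on $\{1,2,\dots\}$ with $\prod_{l=2}^{n}\tfrac{q_l}{p_l}=\prod_{l=2}^{n}\tfrac{l-1}{l+1}=\tfrac{2}{n(n+1)}$ summable, hence transient, and a transient nearest-neighbour chain on a half-line satisfies $|X_n|\to\infty$ a.s. For the environmental limit, note that along a URW trajectory $X_n$ runs off to the distinguished end $\xi$ of $CT_d$, so $B_R(X_n)$ is eventually the radius-$R$ ball of $T_d$, while from level $l$ the walk moves towards $\xi$ with probability $\tfrac{l+1}{2l}\to\tfrac12$ and to each other neighbour with probability $\tfrac{l-1}{2(d-1)l}\to\tfrac{1}{2(d-1)}$; thus the environmental limit (independent of the starting vertex) is $T_d$ carrying the walk $P'$ that, for a fixed end $\xi'$, moves towards $\xi'$ with probability $\tfrac12$ and to each remaining neighbour with probability $\tfrac{1}{2(d-1)}$ --- precisely the Doob transform of the horospherical eigenfunction $(d-1)^{-b_{\xi'}(\cdot)/2}$, i.e.\ the unique Doob walk on $T_d$ of energy $2\sqrt{d-1}$ going towards a fixed end with probability $1/2$ (matching Proposition~\ref{propdoob}, which already forces the URW to have energy $\exp(h_{\mathrm{top}})=2\sqrt{d-1}$, i.e.\ $p_{xy}p_{yx}=\tfrac{1}{4(d-1)}$ on every edge). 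Its one-step entropy equals $-\tfrac12\log\tfrac12-\tfrac{d-1}{2(d-1)}\log\tfrac{1}{2(d-1)}=\log(2\sqrt{d-1})$ at every vertex, so Proposition~\ref{prop:envlimit} gives $h_U=\log(2\sqrt{d-1})=h_{\mathrm{top}}$. Finally, $CT_d$ admits no absolutely continuous finite stationary measure for its URW: any such measure would be forced to vanish on every level by $|X_n|\to\infty$; equivalently, the ``moreover'' clause of Proposition~\ref{prop:envlimit} cannot hold, since the environmental limit $T_d$ is a.s.\ not rooted-isomorphic to $(CT_d,o)$ (e.g.\ $T_d$ has no degree-$1$ vertices, whereas every vertex of $CT_d$ lies within finite distance of a leaf).

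The main obstacle is the asymptotic analysis of $w_n(l)$: although $G_l(z)$ is explicit, one must spot the cancellation at $z=1/d$ (without which one would wrongly conclude $\rho=d$) and then extract the leading term with its exact linear dependence on $l$, controlling the two-branch-point parity oscillation well enough that it cancels in the ratios of \eqref{eq:defURWn}. A purely probabilistic substitute is available via the reflection principle and a local central limit theorem for half-line-killed simple random walk, which gives $w_n(l)\sim \mathrm{const}\cdot(d-1)^{l/2}\,l\,(2\sqrt{d-1})^{n}n^{-3/2}$ directly. Once the asymptotics are in hand, existence of the URW, the explicit kernel, transience, the environmental limit, and the entropy identity all follow from the identity $\sum_{y\sim x}W_{n-1}(y)=W_n(x)$, the standard birth--death recurrence test, and Propositions~\ref{propdoob} and~\ref{prop:envlimit}.
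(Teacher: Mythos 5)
Your proposal is correct and reaches all the claims of the theorem, but the technical core is carried out by a genuinely different method. Both arguments begin by projecting to levels and both ultimately rest on the asymptotic $w_n(l)\sim C_{\pm}\,l\,(d-1)^{l/2}(2\sqrt{d-1})^{n}n^{-3/2}$ with a parity-dependent positive constant; the paper obtains this by conjugating the tridiagonal level operator $B$ to $\sqrt{d-1}\,J$, diagonalizing $J$ by the sine transform, and running a Laplace analysis of the resulting integral $I_n(k)$ at $\theta=0$ and $\theta=\pi$, whereas you derive the closed-form generating function $G_l(z)=\frac{1-\mu_-(z)^l}{1-dz}$ and apply singularity analysis at the two branch points $\pm z_c$. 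The two routes are of comparable difficulty and your key observations are exactly the right ones: the removability of the pole at $z=1/d$ (via $\mu_-(1/d)=1$), without which one would wrongly read off $\rho=d$; the linear-in-$l$ coefficient coming from expanding $\mu_-^l$ to first order in $\sqrt{1-z/z_c}$; and the parity matching $(n-1)+(l\pm1)\equiv n+l$ that makes the oscillating constants cancel in the ratios of \eqref{eq:defURWn}. (Your phrase ``analyticity of $G_l$ at $z=0$ forces $A=0$'' should really invoke boundedness of $G_l$ uniformly in $l$ on a small disk, or uniqueness of the formal power series solution, since $A$ and $B$ are functions of $z$ determined by two conditions; this is routine to repair.) Downstream of the asymptotics, the two proofs coincide: same birth--death transience test, same identification of the environmental limit as the horospherical Doob walk $P^{\xi}$ on $T_d$, same use of Proposition~\ref{prop:envlimit} for $h_U=\log(2\sqrt{d-1})$. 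The only other divergence is the last claim: the paper computes directly that $F(k)=k(\sqrt{d-1})^k$ fails to lie in $\ell^2(\mathbb{N},p_k)$, so the stationary measure $F^2 p$ is infinite, while you deduce the absence of a finite stationary measure from transience and from the ``moreover'' clause of Proposition~\ref{prop:envlimit} (the environmental limit $T_d$ is not rooted-isomorphic to any $(CT_d,o)$); both are valid, and the paper's version has the small bonus of exhibiting the infinite stationary measure explicitly.
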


\noindent \textbf{Remark.} As we discussed, uniqueness of the entropy maximizer never holds for deterministic nonamenable graphs, as one can always perturb an entropy maximizing walk at a negligible set of vertices.  
The same trick works on a graphing -- like the canopy tree -- where an entropy maximizer is strongly localized: one can make small changes to the random walk along the environment sequence, to be never seen again by the particle, hence eventually not affecting the walk entropy rate. A potential surviving question here is whether the \emph{environmental limit} of entropy maximizers is unique for (suitable) graphings. 

\bigbreak 

Now let us run the following experiment. Let $A$ and $B$ be two large,
connected $d$-regular graphs. The MERW\ of their disjoint union equals SRW.
Let us enforce localization on the MERW\ of $B$ by perturbing $B$, say, at a
single site, to have adjacency norm $\rho > d$. This will, of course, not affect the MERW on $A$. Now connect $A$ and $B$ by a bridge, that is, erase an edge in both and cross over. The effect on the adjacency norm is negligible, but it will strongly change the MERW on the whole $A$. Indeed, the new MERW will be a Doob walk of energy $\sim\rho$ which makes $p_{xy}p_{yx} \sim 1/\rho^2$ for \emph{all} edges $xy$. This effect is size and distance independent, and as the sizes go to infinity, it becomes an external field. The following theorem
implies that the adjacency norm completely controls the nature of this
change and that no symmetry breaking will happen at the critical value $d$.

\begin{mtheorem} \label{thm:staple} 
Let $G_{n}$ be a sequence of finite regular simple graph with Benjamini-Schramm (BS) limit $(G,o)$. Assume that be the adjacency norm $\rho_n$ of the loop perturbed graph $G_{n}+\sigma_n V_{\omega_n}$ converge. \newline 
1) If $\lim_{n\rightarrow \infty }\rho_n=d$ then 
\[
\mathrm{MERW}(G_{n}+\sigma_n V_{\omega_n}) \xrightarrow[n]{BS} (\mathrm{SRW}(G),o)
\]%
2) If $\lim_{n\rightarrow \infty }\rho_n=\rho >d$, then no subsequence of $\mathrm{MERW}(G_{n}+\sigma_n V_{\omega_n})$ converges to $(\mathrm{SRW}(G),o)$ in Benjamini-Schramm.
\end{mtheorem}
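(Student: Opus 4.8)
I would dispatch this first, as it is short. For any finite simple graph $H$ with spectral radius $\rho_H$ and positive Perron eigenfunction $F$, $\mathrm{MERW}(H)$ is the Doob transform $p_{xy}=F(y)/(\rho_H F(x))$, so every edge $xy$ carries the \emph{deterministic} mark $p_{xy}p_{yx}=A_{xy}A_{yx}/\rho_H^2=1/\rho_H^2$. Since the off-diagonal part of $G_n+\sigma_n V_{\omega_n}$ is still the $\{0,1\}$-matrix of $G_n$, every non-loop edge of $\mathrm{MERW}(G_n+\sigma_n V_{\omega_n})$ has mark exactly $1/\rho_n^2\to 1/\rho^2$. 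In the decorated local topology the product of the forward and backward weights across the edge from the root to a designated neighbour is continuous and equals $1/d^2$ in $(\mathrm{SRW}(G),o)$, where $G$ is $d$-regular (say $d\ge 1$, the case $d=0$ being vacuous), so the root has a neighbour a.s. As $1/\rho^2\ne 1/d^2$, no subsequence can converge to $(\mathrm{SRW}(G),o)$ in Benjamini--Schramm.

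\textbf{Part (1), reductions.} Testing $A_{G_n}+\sigma_n D_{\omega_n}$ against the normalised constant gives $\rho_n\ge d+\sigma_n|\omega_n|/|V(G_n)|$, hence $\sigma_n|\omega_n|/|V(G_n)|\to 0$; testing against the indicator of a looped vertex gives $\rho_n\ge\sigma_n$, so $(\sigma_n)$ is bounded and, along a subsequence, $\sigma_n\to\sigma_\infty\in[0,\infty)$. If $\sigma_\infty>0$ then $|\omega_n|/|V(G_n)|\to 0$ and, degrees being bounded, a union bound shows a fixed-radius ball around a uniform root avoids $\omega_n$ w.h.p.; if $\sigma_\infty=0$ the MERW self-loop weight $\sigma_n/\rho_n$ tends to $0$. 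Either way the self-loop decoration is asymptotically invisible to the walk, and $(G_n+\sigma_n V_{\omega_n})\to(G,o)$ as graphs, so it remains to prove that the off-loop MERW weights converge to $1/d$ in Benjamini--Schramm.

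\textbf{Part (1), core.} Write $g_n=\log F_n$, with $F_n>0$ the Perron eigenfunction of $A_{G_n}+\sigma_n D_{\omega_n}$ (positive by irreducibility), and set $\mathrm{defect}(x)=\frac1d\sum_{y\sim x}\bigl(g_n(y)-g_n(x)\bigr)$. Dividing the eigenvalue equation $\sum_{y\sim x}F_n(y)=(\rho_n-\sigma_n\mathbf{1}_{\omega_n}(x))F_n(x)$ by $F_n(x)$ gives $\sum_{y\sim x}e^{g_n(y)-g_n(x)}=\rho_n-\sigma_n\mathbf{1}_{\omega_n}(x)\le\rho_n$; Jensen applied to $\log$ yields the pointwise bound $\mathrm{defect}(x)\le\log(\rho_n/d)\to 0$, while the double-counting identity $\sum_x\mathrm{defect}(x)=0$ (each $g_n(y)$ is counted $\deg(y)=d$ times) forces $\mathbb{E}_{v_n}[\mathrm{defect}(v_n)]=0$ for a uniform root $v_n$. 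Subtracting $1+\Delta$ from $e^{\Delta}$ termwise, with $\Delta=g_n(y)-g_n(v_n)$ and $\phi(t)=e^t-1-t\ge 0$, gives $\sum_{y\sim v_n}\phi(\Delta)=(\rho_n-\sigma_n\mathbf{1}_{\omega_n}(v_n))-d-d\,\mathrm{defect}(v_n)\le\rho_n-d-d\,\mathrm{defect}(v_n)$, hence $\mathbb{E}_{v_n}\bigl[\sum_{y\sim v_n}\phi(g_n(y)-g_n(v_n))\bigr]\le\rho_n-d\to 0$. Each increment satisfies $g_n(y)-g_n(v_n)\le\log\rho_n$ (from the eigenvalue equation), and on $(-\infty,\log\rho_n]$ the convex function $\phi$ is bounded below by some $c(\epsilon)>0$ outside $(-\epsilon,\epsilon)$; therefore the integer $\#\{y\sim v_n:|g_n(y)-g_n(v_n)|\ge\epsilon\}$ has expectation $\le(\rho_n-d)/c(\epsilon)\to 0$, so the set $B_n$ of vertices with an $\epsilon$-oscillating incident edge has density $o(1)$. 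A bounded-degree union bound then gives $\mathbb{P}_{v_n}(B_r(v_n)\cap B_n\ne\varnothing)\le(d^r+1)\,|B_n|/|V(G_n)|\to 0$, so w.h.p. every edge $xy$ inside $B_r(v_n)$ has $p_{xy}=e^{g_n(y)-g_n(x)}/\rho_n\in(e^{-\epsilon},e^{\epsilon})/\rho_n$; letting $n\to\infty$ and then $\epsilon\to 0$ gives $p_{xy}\to 1/d$ uniformly on $B_r(v_n)$ in probability, which is the claimed convergence.

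\textbf{Main obstacle.} The delicate step is the core of Part (1): upgrading the soft averaging identity $\sum_x\mathrm{defect}(x)=0$ into genuine pointwise control of the local oscillation of $F_n$ at a typical vertex. The convexity gap $\phi(t)\ge c(\epsilon)>0$ — available because the eigenvalue equation bounds the increments $g_n(y)-g_n(x)$ from above, even at looped vertices where replacing $\rho_n$ by $\rho_n-\sigma_n$ only \emph{helps} — together with the bounded-degree transfer from ``uniform random edge'' to ``all edges in a bounded ball'' is exactly what makes the argument close. The alternative route (both $\mathrm{MERW}(G_n+\sigma_n V_{\omega_n})$ and $\mathrm{SRW}(G_n)$ have walk entropy rate tending to $\log d$, so one might hope to invoke uniqueness of the entropy maximiser) does not obviously simplify matters, since the limiting graphing need not have a spectral gap and the corresponding stability statement is at least as hard.
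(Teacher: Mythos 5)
Your proposal is correct. Part (2) is essentially the paper's argument: both exploit that $\mathrm{MERW}$ of a finite graph is a Doob walk, so the edge mark $p_{xy}p_{yx}=1/\rho_n^2$ is deterministic and passes to the limit, ruling out $1/d^2$. Part (1) is where you genuinely diverge. The paper's route is soft: it passes to a subsequential limit $(G,P,o)$, observes it is a unimodular Doob walk of energy $d$, and applies the Mass Transport Principle $\mathbb{E}\bigl[\sum_x p_{ox}\bigr]=\mathbb{E}\bigl[\sum_x p_{xo}\bigr]=\mathbb{E}\bigl[\sum_x \tfrac{1}{d^2 p_{ox}}\bigr]$ together with the harmonic--arithmetic mean inequality, whose equality case forces $p_{ox}=1/d$ a.s.; compactness then upgrades subsequential to full convergence. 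You instead stay on the finite graphs and run a quantitative convexity-defect argument: the identity $\sum_{y\sim x}\phi(g_n(y)-g_n(x))=(\rho_n-\sigma_n\mathbf{1}_{\omega_n}(x))-d-d\,\mathrm{defect}(x)$ with $\phi(t)=e^t-1-t\ge 0$, combined with the exact cancellation $\sum_x\mathrm{defect}(x)=0$ (the finite-graph shadow of the MTP), bounds the expected number of $\epsilon$-oscillating edges at a uniform root by $(\rho_n-d)/c(\epsilon)$, and a bounded-degree union bound transfers this to whole balls. Your reductions (boundedness of $\sigma_n$, vanishing of $\sigma_n|\omega_n|/|V(G_n)|$, invisibility of the loops) are sound, and the convexity step closes because $\phi$ is bounded away from $0$ off any neighbourhood of the origin. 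What each buys: the paper's argument is shorter and structurally parallel to its graphing machinery, while yours avoids invoking unimodularity of the limit altogether and yields an explicit rate in terms of $\rho_n-d$ — precisely the kind of effective estimate the authors defer to a forthcoming paper. One cosmetic point: to conclude convergence of the full sequence in Part (1) you should state explicitly (as the paper does) that every subsequence admits a further subsequence with the same limit, and invoke compactness of the decorated local topology.
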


A particularly interesting case is when 2) holds together with  
\[
\left\vert \omega_n \right\vert
=o(\left\vert V(G_{n})\right\vert )\text{.}     
\]
Here, the Kolmogorov-Sinai entropy of $\mathrm{MERW}(G_{n}+\sigma_n V_{\omega_n})$ converges to $\log (\rho)>\log (d)$. On the other hand, any (subsequential) limit of $\mathrm{MERW}(G_{n})$ will be a Doob walk $P$ of energy $\rho $ on a $d$-regular graphing $G$ with walk entropy rate $h_{P}<\log (d)$. On the finite level, the surplus entropy is stored at the higher degree vertices as the stationary measure concentrates on them. But where does the surplus entropy go in the limit? The random walk $P$ does remember $\rho $ but as energy, not as leafwise entropy. We will discuss this in a forthcoming paper, also giving explicit estimates for  Theorem \ref{thm:staple}. 

Ergodic theory gives us some extra information in this phase: while the MERW stays away from SRW and flows towards the high degree node(s), its local drift will \emph{not} be dominated by a single direction. If that would be the case, the lift of the limiting walk to $T_d$ would hit the boundary at a Dirac measure, producing an invariant probability measure on it -- violating a classical theorem. 

In the case of random $d$-regular graphs, we expect a strong concentration result:

\begin{question}
Let $(G_{n})$ be a random $d$-regular graph on $n$ vertices, let $v_{n}$ be a uniform random vertex of $G_{n}$ and fix $\sigma >d-1-(d-1)^{-1}$. Is there a graphing $\lG_{\rho }$ with a random walk $P_{\rho}$ of energy $\rho = \rho(\sigma)$ on $G_{\rho }$ such that 
\[
\mathrm{MERW}(G_{n}+\sigma V_{\{v_{n}\}})\rightarrow P_{\rho }\text{ } 
\]%
in Benjamini-Schramm convergence, a.s.? Can one directly define $P_{\rho }$
/ its hitting measure?
\end{question}

Another related question is which invariant random Doob walk(s) of energy $\rho > d$ on $T_d$ have maximal walk entropy rate? 

The most natural random graph model where localization of MERW is a built-in feature is the giant component of the Erdos-Renyi graph $G(n,c/n)$ where $c>1$ is a constant. One can attempt to take the BS limit of the MERWs, which, if exists, will live on a Poisson Galton-Watson tree. The issue here is that the maximal degree will tend to infinity with $n$ and so is the adjacency norm $\rho$. Since the MERW is a Doob walk of energy $\rho$, any weak limit will have infinite energy, causing $p_{xy}*p_{yx} = 0$ for all edges $(x,y)$. That is, every edge will choose an orientation. 

\begin{question}
For $c>1$ let $G_n$ be the giant component of the Erdos-Renyi graph $G(n,c/n)$. Does $\mathrm{MERW}(G_n)$ Benjamini-Schramm converge a.s.? 
\end{question}

\noindent \textbf{Remark.} The current paper focuses on nonamenable graphs, but it is quite natural to ask whether the weighted graphs 
\[
\mathbb{Z}^{d}+\sigma V_{\omega }\text{ \ }(\sigma >0\text{, }\omega \text{
is }\{p,1-p\}\text{ i.i.d. for }0<p<1)
\]
admit a Uniform Random Walk. We can show that for $d=1$ the answer is no, for
any $\sigma > 0$ and $0<p<1$. For higher dimensions, we have only partial results, and at
this point, the existence of URW is not clear. This direction is fundamentally different from the
scope of the current paper. Not just in its toolset and the
underlying difficulties: opposed to the nonamenable setting, for $\mathbb{Z}^{d}+\sigma V_{\omega }$ it is meaningful
to ask about the existence of URW at different \emph{scales}, using a sequence of embeddings of $%
\mathbb{Z}^{d}$ into $\mathbb{R}^{d}$. We will publish our results in a forthcoming paper. 

\bigskip   

\noindent \textbf{Acknowledgements.} The authors thank Francois Ledrappier and Tianyi Zheng for helpful discussions. They thank Tom Hutchcroft and Omer Tamuz for suggesting the Mass Transport Scheme used in Theorem \ref{thm:staple}. The first author thanks Vilas Winstein with whom he made initial computations on the canopy tree. 

\bigskip

The paper is organized as follows. Section~2 contains preliminary remarks about the URW. Section~3 is devoted to the analysis of the URW under a single-site perturbation. Section~4 analyzes the URW on the canopy tree. Section~5 develops the membrane method in the general setting of invariant loop processes and establishes the corresponding maximal–entropy characterization on graphings. Finally, Section~6 discusses the continuity of the URW along Benjamini–Schramm convergent sequences and derives several consequences for the MERW on finite regular graphs.

\section{Preliminaries}

This section contains definitions and lemmas used throughout the paper. 

Every graph $G$ will be assumed connected and locally finite. We allow loops and edge weights. The adjacency matrix of $G$ is a symmetric non-negative array
$A_G = (A_{xy})_{x,y\in V(G)}$ with $A_{xy} > 0$ if and only if $x$ and $y$ are neighbors, denoted by $x\sim y$. We assume that both the combinatorial and weighted degree
\[
\deg(x) := \#\{y : A_{xy} > 0\}, 
\qquad 
\deg_A(x) := \sum_{y} A_{xy},
\]
are uniformly bounded: 
\[
\sup_{x \in V(G)} \max\{\deg(x),\, \deg_A(x)\} \;\le D \;<\;\infty .
\]
The adjacency operator $A_G$ acts on functions $\varphi : V(G)\to\mathbb{C}$ by
\[
(A_G \varphi)(x) = \sum_{y\sim x} A_{xy}\varphi(y).
\]
We write $\ell^2(G)=\ell^2(V(G))$ for the Hilbert space of square-summable functions with scalar product
$$
\langle \varphi, \psi\rangle := \sum_{x\in V(G)} \overline{\varphi(x)}\,\psi(x).
$$
Since $A_G$ is bounded and symmetric, it is self-adjoint. Its spectrum is denoted by $\Sigma_{\ell^2(G)}(A_G)\subset\mathbb{R}$, 
and its spectral radius is
$$
r(G) := \max \Sigma_{\ell^{2}(G)}(A_G)
       = \lim_{n\to\infty} \sqrt[\,2n\,]{\langle \mathbf{1}_x, A_G^{2n}\mathbf{1}_x\rangle}
       \qquad\text{for any } x\in V(G),
$$

A \emph{walk} is a sequence $w = x_0 x_1 \cdots x_n$ such that $x_{i-1} \sim x_i$ for all $1 \leq i \leq n$, where $n$ is called the length of $w$, denoted by $|w|$. 
We interpret $A_{xy}$ as the number of possible links that a walker located at site $x$ can choose in order to move to site $y$, with a natural extension to non-integer weights.
The \emph{total weight} of a walk $w= x_0 x_1 \cdots x_n$ is
$A(w) := A_{x_{o} x_1}\cdots A_{x_{n-1} x_n}$.

Define $W_n(x)$ to be the total weight of all walks of length $n$ starting at $x$:
\[
W_n(x) := \langle \mathbf{1}_x, A_G^n \mathbf{1}_{V(G)} \rangle = \sum_{|w|=n} A(w).
\]
A key quantity is the exponential growth rate:
\begin{equation}
\rho(G) := \lim_{n \to \infty} \sqrt[n]{ W_n(x) }.
\end{equation}
This limit, when it exists, is independent of the choice of $x \in V(G)$. Since $r$ and $\rho$ correspond to the exponential growth of return and general walks respectively, we always have
\[
\|A_G\| = r(G) \leq \rho(G) \leq d_A(G):=\sup_{x} \sum_y A_{xy}.
\]

The exponent of $\rho(G)$ is called the \emph{topological entropy} of $G$, denoted by $h_{\rm top}(G):= \log \rho(G)$.

\begin{proof}[Proof of Proposition~\ref{prp:entropies}]
Recall that for a Markov chain $P$ on $G$, the walk entropy rate $h_{P}(x)$ at $x \in G$ is the rate for the walk entropy $H_P^n(x)$ of length $n$. 
By using Jensen's inequality, for any $P$, we have 
\[
H_P^n(x) = -\sum_{w} \log p(w) \log \frac{p(w)}{a(w)}\le
\log W_n(x).
\]
Thus, whenever $h_P(x)$ and $h_{\rm top}$ exist, we have $h_P(x) \le h_{\rm top}$.
From the equation $H_P^n(x) = (PH_P^{n-1})(x) + H_P^1(x) = \sum_{k=0}^{n-1} (P^kH_P^1)(x)$, we verify that $h_P$ is a $P$-harmonic function when it is defined for all $x$.
Due to the maximum principle, once it attains the value $h_{\rm top}$, it is constant function: $h_P \equiv h_{\rm top}$. 
\end{proof}

\medskip

\begin{proposition}[Proposition~\ref{propdoob}]
    If the URW on $G$ exists, then it is a Doob walk of energy $\exp(h_{\rm top)}$. In particular, it is a Markov chain.
\end{proposition}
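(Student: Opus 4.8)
The plan is to work entirely with the ratio description of the kernel. Write $u^{(n)}_{xy}:=A_{xy}W_{n-1}(y)/W_n(x)$; by the formula \eqref{eq:defURWn} recorded above, existence of the URW means exactly that $u_{xy}:=\lim_{n\to\infty}u^{(n)}_{xy}$ exists for every edge $x\sim y$, and (since the URW is a genuine random walk on $G$) $u_{xy}>0$ precisely when $x\sim y$, equivalently $W_{n-1}(y)/W_n(x)$ has a strictly positive limit along each edge. Note also that $\sum_{y\sim x}u^{(n)}_{xy}=1$ for every $n$, because $W_n(x)=\sum_{y\sim x}A_{xy}W_{n-1}(y)$; since rows are finite this passes to the limit, so $(u_{xy})$ is a stochastic kernel and the URW is a Markov chain.

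The crux is an index-shifted telescoping. Fix a walk $w=(x_0,\dots,x_k)$ with $x_0=x$, $x_k=y$, and for $n\ge k$ form
\[
\widetilde p^{\,(n)}(w)\;=\;\prod_{i=1}^{k}\frac{A_{x_{i-1}x_i}\,W_{n-i}(x_i)}{W_{n-i+1}(x_{i-1})}.
\]
On the one hand the factors $W_{n-1}(x_1),\dots,W_{n-k+1}(x_{k-1})$ cancel pairwise between consecutive terms, leaving $\widetilde p^{\,(n)}(w)=a(w)\,W_{n-k}(y)/W_n(x)$. On the other hand the $i$-th factor is precisely $u^{(n-i+1)}_{x_{i-1}x_i}$, and since $i$ stays bounded while $n\to\infty$ it converges to $u_{x_{i-1}x_i}$; hence $\widetilde p^{\,(n)}(w)\to\prod_i u_{x_{i-1}x_i}=p(w)$. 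Combining, for every walk $w$ of length $k$ from $x$ to $y$,
\[
\frac{p(w)}{a(w)}\;=\;\lim_{n\to\infty}\frac{W_{n-k}(y)}{W_n(x)}\;=:\;G_k(x,y),
\]
a number depending only on $x,y,k$ and not on $w$. This is exactly the Doob-walk property. I expect this step — spotting the shifted product that telescopes, together with the trivial but essential remark that shifting $n$ by a bounded amount does not change the limit of a factor — to be the only real idea; everything after is bookkeeping.

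It remains to identify the energy. Apply the identity to the back-and-forth walk $(x,y,x)$ of length $2$ across a fixed edge $x\sim y$, for which $p(w)=u_{xy}u_{yx}$ and $a(w)=A_{xy}A_{yx}$:
\[
c\;:=\;\lim_{n\to\infty}\frac{W_{n-2}(x)}{W_n(x)}\;=\;G_2(x,x)\;=\;\frac{u_{xy}u_{yx}}{A_{xy}A_{yx}}\;>\;0,
\]
and the fact that the left-hand side does not involve $y$ recovers edge-independence of $u_{xy}u_{yx}/(A_{xy}A_{yx})$, i.e.\ of the energy. From $W_n(x)/W_{n-2}(x)\to 1/c$, the elementary ``ratio implies root'' lemma applied separately to the even and the odd subsequences yields $W_n(x)^{1/n}\to c^{-1/2}$; hence $\rho(G)=\lim_n W_n(x)^{1/n}=c^{-1/2}$ exists and $h_{\mathrm{top}}(G)=\log\rho(G)=-\tfrac12\log c$. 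Therefore the energy of the URW is
\[
\sqrt{\frac{A_{xy}A_{yx}}{u_{xy}u_{yx}}}\;=\;\frac{1}{\sqrt c}\;=\;\rho(G)\;=\;\exp\!\big(h_{\mathrm{top}}(G)\big),
\]
which is the asserted identity.
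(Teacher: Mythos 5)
Your proof is correct and follows essentially the same route as the paper: the shifted telescoping product showing $p(w)/a(w)=\lim_n W_{n-k}(y)/W_n(x)$ is exactly the paper's argument, and the energy identification via the length-two walk matches as well. The extra details you supply (row-sum normalization for the Markov property, and deducing $W_n(x)^{1/n}\to c^{-1/2}$ from the two-step ratio) are correct elaborations of steps the paper leaves implicit.
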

\begin{proof}
From the existence of the limit \eqref{eq:defURWn}, for any path $w = x_0x_1\cdots x_k$, the probability of $w$ for URW is 
\begin{align*}
    u(w)
    =\prod_{j=1}^k\lim_{n \to \infty} A_{x_{j-1}x_j}\frac{W_{n-j}(x_j)}{W_{n-j+1}(x_{j-1})}
    = a(w) \lim_{n\to\infty}\frac{W_{n-k}(x_k)}{W_{n}(x_{0})}.
\end{align*}
Therefore, the URW is a Doob walk: for any path $w_1, w_2$ of length $k$, we have 
\[
\frac{u(w_1)}{a(w_1)} = \frac{u(w_2)}{a(w_2)} = \lim_{n\to\ \infty} \frac{W_{n-k}(x_k)}{W_n(x_0)}
\]
Especially by taking $w$ with $k=2$, we verify that URW is a Doob walk of energy $\rho(G) = \exp(h_{\rm top})$.
\end{proof}

We shall study more general objects called graphings.
\begin{definition}
A (bounded, weighted) \emph{graphing} is a triple $\mathcal{G} = (X,\mu,A)$, where 
$(X,\mu)$ is a standard Borel probability space and 
$A : X\times X \to [0,\infty)$ is a symmetric Borel function satisfying the following two conditions. It has essential maximal weighted degree
\[
\operatorname*{ess\,sup}_{x\in X}
   \max \left\{ \#\{y : y\sim x\},\; \sum_{y} A_{xy} \right\} < \infty,
\]
and satisfies the \emph{Mass Transport Principle} (MTP) 
\begin{equation}\label{eq:MTP}
\int_S \deg_{\mathcal{G}}(x, T)\, \mu(\mathrm{d}x)
   = \int_T \deg_{\mathcal{G}}(x, S)\, \mu(\mathrm{d}x),    
\end{equation}
for all $S,T \in \mathcal{B}(X)$, where $\deg_{\mathcal{G}}(x,S) := \sum_{y\in S} A_{xy}.$
\end{definition}
When the weights are of the form $A_{xy}=1_{\{(x,y)\in E\}}$ for some subset $E\subset X\times X$, one recovers the usual definition of unweighted (i.e. simple) graphing \cite{Lovasz2012large}. As before, we write $x\sim y$ for $A_{xy}>0$, this defines a Borel graph on $X$. A graphing $\mathcal{G}$ is said to be \emph{ergodic} if any Borel set $S \in \mathcal{B}(X)$ that is invariant under the graphing adjacency relation (that is, $x \in S$ and $y\sim x$ implies $y \in S$) satisfies $\mu(S)\in\{0,1\}$. 

The graphing adjacency $A_\lG$ is the operator acting on $L^2(\lG) := L^2(X,\mu)$ via 
$$(A_{\lG} F)(x) = \sum_{y\sim x} A_{xy}F(y).$$
The MTP is precisely the self adjointness condition for $A_{\lG}$

Two rooted connected graphs are isomorphic, denoted by $(G,o) \cong (G^\prime, o^\prime)$, if there is a graph automorphism $\phi: G \to G^\prime$ which preserves the root $\phi(o) = o^\prime$. We denote by $\mathbb{G}_{\bullet}^D$ the set of (isomorphism class of) rooted graph $(G, o)$ with maximal degree $\le D$. The topology of Gromov-Hausdorff convergence turn $\mathbb{G}_{\bullet}^D$ into a Polish space. The same discussion applies to the class of weighted graphs.

Every (possibly weighted) graphing $\lG = (X,\mu,A)$ with bounded degree comes with maps that sends $o \in X$ to $(G,o) \in \mathbb{G}_{\bullet}^D$, obtained by restriction to the connected component of $C(o) \subset X$. We call $(G,o)$ the \emph{leaf} associated to the point $o \in X$. We obtain an adjacency operator $A_{(G,o)}$ that act on the Hilbert space $\ell^2(G,o)$. When the point $x$ follows the distribution $\mu(\dd x)$, this induce a distribution on $\mathbb{G}_{\bullet}^D$, and the MTP implies the unimodularity condition (e.g. see \cite{aldous2007processes}):
\[
\int_{\mathbb{G}_{\bullet}^D} \sum_{x\in V(G)} F(G,o,x)\, \mu(\dd(G,o))
\;=\;
\int_{\mathbb{G}_{\bullet}^D} \sum_{x\in V(G)} F(G,x,o)\, \mu(\dd(G,o)).
\]
for non negative measurable function $F$ on the space $\mathbb{G}_{\bullet\bullet}^D$ of isomorphism class of doubly rooted graphs. \\
\\
For a bounded weighted graphing $\lG$, we define $W_n(x)$ to be the total weighted of the walk of length $n$ as the root of the leaf $(G,o)$, and similarly $\rho((G, o))$. For a bounded degree graphing $\lG$, we define the topological entropy, denoted by $h_{\rm top}=h_{\rm top}(\lG,x)$, in the same way.

A random walk on a graphing is a Borel function $P \colon X\times X \to [0,1]$ with the following condition. If $A_{xy}>0$ then $p_{xy}>0$ and $\sum_{y\sim x}p_{xy} \equiv 1$ for almost all $x \in (X,\mu)$. This induces a Markov kernel on each leaf that we write $(G,P,o)$. We write $\mathbb{G}_{\bullet}^{D,M}$ the space obtained from $\mathbb{G}_{\bullet}^{D}$ by a Markov chain decoration.

We shall introduce a specific example of a graphing induced from a locally finite connected $d$-regular vertex-transitive graph $G$.
Let $\Omega=\Omega_G = \{0,1\}^V$ be the space of $\{0,1\}$-configurations on the vertex set of $G$.
For each configuration $\omega$ in $\Omega$, we define the graph decorated with a loop of weight $\sigma\ge0$ at each vertex $x$ where $\omega_{x} =1$, denoted by $G+\sigma V_{\omega}$.
In terms of the adjacency operator, we write
\[
A_{G+\sigma V_{\omega}} = A_G + \sigma \sum_{x\in\{\omega_{x}=1\}}\mathbf{1}_{x}\mathbf{1}_{x}^{*}.
\]
We will discuss in Section~\ref{sec:rankoneperturbation} on the case of graphs with a single loop.

We can realize the space of configurations as a Borel graph by identifying each configuration $\omega$ with the rooted graph $(G+\sigma V_{\omega}, o)$ with weighted loops for some fixed root $o \in G$. Two configurations $\omega, \omega^\prime$ in $\Omega$ are adjacent if and only if there is an automorphism $\phi: G \to G$ which sends $o$ to an adjacent vertex and $\phi.\omega = \omega^\prime$.

\section{Rank-One Perturbation for regular graphs}\label{sec:rankoneperturbation}

In this section, we state and prove the full theorem behind Theorem \ref{thm:rankone}. 

We assume that $G$ is an infinite, $d$-regular graph with the adjacency matrix $A=A_{G}$. Since the number of walks of length $n$ starting from any vertex $x$ is $W_n(x) = d^n$, the URW on $G$ coincides with the simple random walk, with transition probabilities $q_{xy} = \frac{1}{d} A_{xy}$.

To illustrate the rich behavior of the URW under local perturbations, we consider a one-parameter family of graphs obtained by adding a loop of weight $\sigma \in \mathbb{R}_{\geq 0}$ at a distinguished root vertex $o \in G$. We denote the resulting weighted graph by $G + \sigma V_{\{o\}}$, with adjacency operator
\[
H = A + \sigma \mathbf{1}_o \langle \mathbf{1}_o, \cdot \rangle = A + \sigma \mathbf{1}_o \mathbf{1}_o^*.
\]
Let $W_n^\sigma(x) := \langle \mathbf{1}_x, H^n \mathbf{1}_{V} \rangle$ denote the total weight of walks of length $n$ starting from $x$ in $G + \sigma V_{\{o\}}$. In this section, we derive the URW explicitly in terms of $\sigma$ and the generating function on $G$:
\[
f_{xy}(t) := \left(1 - tA\right)^{-1}_{xy}.
\] 
We note that special value $t = \frac{1}{d}$ plays a key role, as it corresponds to the Green function of the SRW on $G$. We restrict to rank-one perturbations for simplicity and for the sake of explicit computations using generalized Green functions. Nonetheless, the same qualitative behavior extends to any one-parameter family $H = A + \sigma V$, provided that $V$ has finitely supported, non-negative entries in the vertex basis of $G$.

\begin{theorem}\label{thm:Rank1}
Let $G$ be a $d$-regular graph. For every $\sigma \in \mathbb{R}_{\geq 0}$, the URW on $G + \sigma V_{\{o\}}$ exists. Define the threshold
\[
\sigma^*(G,o) := \frac{d}{f_{oo}(\tfrac{1}{d})} \in [0,d),
\]
and 
\[
\rho_\sigma := \limsup_{n \to \infty} \sqrt[n]{W_n^\sigma(x)}, \qquad
u^\sigma_{xy} := \lim_{n \to \infty} \frac{H_{xy} W_{n-1}^\sigma(y)}{W_n^\sigma(x)}.
\]
\begin{itemize}
    \item If $0 \leq \sigma < \sigma^*(G,o)$, then $\rho_\sigma = d$ and the URW is transient, with transition probabilities
    \[
    u^\sigma_{xy} = \frac{H_{xy}}{d} \cdot \frac{1 - \frac{\sigma}{d} f_{oo}(\tfrac{1}{d}) + \frac{\sigma}{d} f_{oy}(\tfrac{1}{d})}{1 - \frac{\sigma}{d} f_{oo}(\tfrac{1}{d}) + \frac{\sigma}{d} f_{ox}(\tfrac{1}{d})}.
    \]
    \item If $\sigma \geq \sigma^*(G,o)$, then the growth rate $\rho_\sigma$ is strictly increasing in $\sigma$, given by $\rho_\sigma = \left[ t f_{oo}(t) \right]^{-1} \left( \frac{1}{\sigma} \right) \geq d$. The URW is recurrent, with transition probabilities
    \[
    u^\sigma_{xy} = \frac{H_{xy}}{\rho_\sigma} \cdot \frac{f_{oy}(\tfrac{1}{\rho_\sigma})}{f_{ox}(\tfrac{1}{\rho_\sigma})}.
    \]
\end{itemize}

For $\sigma > \sigma^*(G,o)$, the process is localized near the root, with a stationary distribution that decays exponentially with the distance from $o$. At the critical value $\sigma = \sigma^*(G,o)$, the URW positive recurrent if the Green function $F_o(x)= f_{ox}(\tfrac{1}{d})$ is square summable and null recurrent otherwise. 
\end{theorem}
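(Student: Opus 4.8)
The plan is to compute the walk generating function $G_x^\sigma(t):=\sum_{n\ge 0}W_n^\sigma(x)\,t^n=\langle\mathbf{1}_x,(1-tH)^{-1}\mathbf{1}_V\rangle$ in closed form, locate its dominant singularity, and then read off $\rho_\sigma$, the transition probabilities $u_{xy}^\sigma$, and the recurrence type by singularity analysis. Since $H=A+\sigma\mathbf{1}_o\mathbf{1}_o^*$ is a rank-one perturbation, the Sherman--Morrison identity gives
\[
(1-tH)^{-1}=R(t)+\frac{t\sigma\,R(t)\mathbf{1}_o\mathbf{1}_o^*R(t)}{1-t\sigma f_{oo}(t)},\qquad R(t):=(1-tA)^{-1},
\]
and $d$-regularity gives $A\mathbf{1}_V=d\mathbf{1}_V$, hence $R(t)\mathbf{1}_V=(1-td)^{-1}\mathbf{1}_V$ for $t<1/d$. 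Combining these,
\[
G_x^\sigma(t)=\frac{1}{1-td}\cdot\frac{1-t\sigma\big(f_{oo}(t)-f_{ox}(t)\big)}{1-t\sigma f_{oo}(t)},
\]
so the positive-axis singularities of $G_x^\sigma$ are $t=1/d$ and the smallest root $t_c$ of $t\sigma f_{oo}(t)=1$ (the numerator does not vanish there, since at $t_c$ it equals $t_c\sigma f_{ox}(t_c)>0$; here I use that $f_{ox}(t)\le f_{oo}(t)$, which follows from the last-exit decomposition for the killed walk on the $d$-regular $G$, also ensuring positivity of the eigenfunctions below).

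Next I locate $1/\rho_\sigma$. The function $\psi(t):=t f_{oo}(t)$ has nonnegative Taylor coefficients, $\psi(0)=0$, and is strictly increasing and continuous on $[0,1/r(G))\supseteq[0,1/d]$, with $\psi(1/d)=f_{oo}(1/d)/d=1/\sigma^*$. If $\sigma<\sigma^*$ then $\psi(t)<1/\sigma$ on $[0,1/d]$, the dominant singularity is the simple pole at $t=1/d$, and $\rho_\sigma=d$; if $\sigma>\sigma^*$ there is a unique $t_c\in(0,1/d)$ with $\psi(t_c)=1/\sigma$, giving a simple pole dominating the one at $1/d$, so $\rho_\sigma=1/t_c>d$, with $\sigma\mapsto\rho_\sigma$ strictly increasing by strict monotonicity of $\psi$ and $\psi(1/\rho_\sigma)=1/\sigma$ the stated inversion; if $\sigma=\sigma^*$ the two singularities collide at $t=1/d$. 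Then I apply singularity analysis: for $\sigma\ne\sigma^*$ the dominant singularity is a simple pole on the positive axis, so $W_n^\sigma(x)\sim\phi_\sigma(x)\,\rho_\sigma^n$ with $\phi_\sigma(x)=1-\tfrac{\sigma}{d}\big(f_{oo}(\tfrac1d)-f_{ox}(\tfrac1d)\big)$ when $\sigma<\sigma^*$ and $\phi_\sigma(x)=c\,f_{ox}(\tfrac1{\rho_\sigma})$ when $\sigma>\sigma^*$ ($c$ an $x$-independent constant); for $\sigma=\sigma^*$ one gets $W_n^{\sigma^*}(x)\sim c_n\,F_o(x)\,d^n$ with $F_o(x)=f_{ox}(\tfrac1d)$ and $c_n$ a positive, regularly varying sequence with $c_{n-1}/c_n\to 1$. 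Plugging into $u^\sigma_{xy}=\lim_n H_{xy}W_{n-1}^\sigma(y)/W_n^\sigma(x)=\tfrac{H_{xy}}{\rho_\sigma}\tfrac{\phi_\sigma(y)}{\phi_\sigma(x)}$ and simplifying (the factor $1-t\sigma f_{oo}(t)$ cancels in the ratio when $\sigma<\sigma^*$) yields the displayed transition probabilities, and in particular the existence of the URW for every $\sigma\ge0$.

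With the kernel in hand I identify the URW as the Doob transform $P^{\phi_\sigma}$ of $H/\rho_\sigma$: using $(1-tA)f_{o\cdot}(t)=\mathbf{1}_o$ together with the defining relation ($\sigma f_{oo}(\tfrac1{\rho_\sigma})=\rho_\sigma$ in the recurrent range, $\sigma f_{oo}(\tfrac1d)<d$ in the transient range) one checks $H\phi_\sigma=\rho_\sigma\phi_\sigma$, so $P^{\phi_\sigma}$ is reversible with respect to $m_\sigma(x):=\phi_\sigma(x)^2$, and its Green function equals $\tfrac{\phi_\sigma(y)}{\phi_\sigma(x)}\langle\mathbf{1}_x,(1-H/\rho_\sigma)^{-1}\mathbf{1}_y\rangle$. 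The three regimes now separate. For $\sigma<\sigma^*$: $f_{oo}(\tfrac1d)<\infty$ and $1-\tfrac{\sigma}{d}f_{oo}(\tfrac1d)>0$, so by Sherman--Morrison $\langle\mathbf{1}_x,(1-H/d)^{-1}\mathbf{1}_y\rangle<\infty$, the URW has finite Green function, hence is transient. For $\sigma>\sigma^*$: $1/\rho_\sigma<1/d\le 1/r(G)$, so $\phi_\sigma=f_{o\cdot}(\tfrac1{\rho_\sigma})\in\ell^2(G)$ because $\|f_{o\cdot}(t)\|_2^2=\langle\mathbf{1}_o,(1-tA)^{-2}\mathbf{1}_o\rangle<\infty$ for $t<1/r(G)$; thus $m_\sigma$ is a finite invariant measure and the URW is positive recurrent, while $\phi_\sigma(x)=f_{ox}(\tfrac1{\rho_\sigma})\le(d/\rho_\sigma)^{d(o,x)}/(1-d/\rho_\sigma)$ by counting walks in the $d$-regular $G$, giving exponential decay of the stationary measure away from $o$. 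For $\sigma=\sigma^*$: the denominator $1-\tfrac{\sigma^*}{d}f_{oo}(\tfrac1d)$ vanishes, so $\langle\mathbf{1}_x,(1-H/d)^{-1}\mathbf{1}_y\rangle=\infty$ and the URW is recurrent; its unique (up to scaling) invariant measure is the reversing measure $F_o^2$, finite exactly when $F_o\in\ell^2(G)$, yielding positive recurrence in that case and null recurrence otherwise.

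The sub- and super-critical analysis is routine: there the dominant singularity is a simple pole on the positive real axis, and elementary transfer/Darboux arguments (or a direct residue computation at $1/d$, resp.\ $t_c$) suffice even when $f_{oo}$ carries additional non-polar singularities on the critical circle. The main obstacle is the critical case $\sigma=\sigma^*$: the nature of the singularity of $G_x^{\sigma^*}$ at $t=1/d$ --- a genuine double pole when $f_{oo}'(\tfrac1d)=\|F_o\|_2^2<\infty$, or a slower Darboux-type singularity modulated by the fine behaviour of $t f_{oo}(t)$ as $t\uparrow 1/d$ --- cannot be pinned down for a general $d$-regular $G$. What one actually needs is only $W_{n-1}^{\sigma^*}(x)/W_n^{\sigma^*}(x)\to 1/d$ and $W_n^{\sigma^*}(y)/W_n^{\sigma^*}(x)\to F_o(y)/F_o(x)$; I expect to get these robustly from the nonnegativity of all coefficient sequences and the location of the dominant singularity on the positive axis (a Karamata-type argument), or else to bypass the Tauberian step entirely by a renewal decomposition of walks at the single vertex $o$, writing $W_n^\sigma$ through first-passage generating functions of the \emph{unperturbed} graph and invoking a scalar renewal/ratio-limit theorem at $o$. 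By contrast the recurrence dichotomy at $\sigma^*$ requires no asymptotics at all, being forced purely by the vanishing of $1-\tfrac{\sigma^*}{d}f_{oo}(\tfrac1d)$ and the reversing-measure identification.
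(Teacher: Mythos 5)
Your route is essentially the paper's route in generating-function clothing: both rest on the rank-one identity $f^\sigma_{ox}(t)=f_{ox}(t)/(1-\sigma t f_{oo}(t))$ together with $A\mathbf{1}_V=d\mathbf{1}_V$, and your closed form for $G_x^\sigma(t)$ is exactly the paper's identity $W_n^\sigma(x)=d^n\bigl(1+\tfrac{\sigma}{d}\sum_{k<n}H^k_{ox}d^{-k}\bigr)$ read off coefficientwise. For $\sigma<\sigma^*$ and $\sigma>\sigma^*$ your argument is correct, though I would not lean on ``transfer/Darboux'' in the subcritical amenable case: when $r(G)=d$ the point $t=1/d$ lies on the circle of convergence of $f_{oo}$, which can carry arbitrary non-polar singularities there and elsewhere on that circle, so transfer theorems do not literally apply. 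What saves you is elementary: $G_x^\sigma(t)=B_x(t)/(1-td)$ with $B_x$ having nonnegative coefficients and $B_x(1/d)<\infty$, so $W_n^\sigma(x)/d^n$ is an increasing bounded sequence and converges -- no complex-analytic input needed. In the supercritical case the pole $t_c<1/d$ sits strictly inside the domain of analyticity of $f_{oo}$, so the residue computation is legitimate; your verification that $H\phi_\sigma=\rho_\sigma\phi_\sigma$, the Green-function criterion $(1-U)^{-1}_{xx}=f^\sigma_{xx}(1/\rho_\sigma)$, the $\ell^2$ bound for $f_{o\cdot}(1/\rho_\sigma)$, and the exponential decay estimate all match the paper.

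The genuine gap is the critical case $\sigma=\sigma^*$, which you explicitly defer. Of the two strategies you propose, only the second one works. A Karamata-type argument would require you to pass from $A(t)/B(t)\to c$ as $t\uparrow 1/d$ (for the generating functions of $\sum_{k\le n}H^k_{oy}d^{-k}$ and $\sum_{k\le n}H^k_{ox}d^{-k}$) to the same limit for the ratio of partial sums; ratio Tauberian theorems of this kind need regular variation or one-sided Tauberian conditions that you cannot guarantee for a general $d$-regular $G$, since the behaviour of $1-\sigma^* t f_{oo}(t)$ as $t\uparrow 1/d$ is not controlled. The paper instead carries out precisely your fallback: decompose walks from $x$ to $o$ at the first visit to $o$, obtaining the convolution $H^n_{ox}d^{-n}=\sum_{k+l=n}q_{xo}(k)h(l)$ with $q_{xo}$ the SRW first-passage probabilities and $h(l)=H^l_{oo}d^{-l}$ bounded with $\sum_l h(l)=\infty$; then $\sum_{k<n}H^k_{ox}d^{-k}\big/\sum_{k<n}h(k)\to\sum_l q_{xo}(l)=f_{ox}(1/d)/f_{oo}(1/d)$ by monotone convergence, which gives both $W_{n-1}/W_n\to 1/d$ and the ratio $F_o(y)/F_o(x)$. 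You should execute this step; as written, the existence of the URW at $\sigma=\sigma^*$ is asserted but not proved. By contrast, your observation that the recurrence dichotomy at criticality needs no asymptotics -- it follows from $1-\tfrac{\sigma^*}{d}f_{oo}(1/d)=0$ forcing $(1-U)^{-1}_{oo}=\infty$, plus the reversing measure $F_o^2$ -- is correct and is how the paper concludes as well.
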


In quantum mechanics, one often considers the Schrödinger operator $dI - A - \sigma\,\mathbf{1}_o\mathbf{1}_o^*$, and it is folklore that an $\ell^2$ eigenvector emerges for sufficiently large $\sigma$.
We argue that Theorem~\ref{thm:Rank1} describes a random-walk analogue of this localization phenomenon.
While the mathematical tools underlying the result are standard, the Markov chain arising in the transient phase appears to be new, even on Euclidean lattices. A surprising phenomenon arises in the non-amenable setting: there exists an intermediate regime in which a top $\ell^2$ eigenvector does exist, yet the URW is \emph{not} its Doob transform\footnote{In this phase, the spectral radius $\rho_{\mathrm{spec}}$ is strictly smaller than the walk growth $\rho_{\mathrm{rw}}$, a key distinction from previous attempts \cite{thibaut2024maximum,duboux2025maximal} to extend MERW to the infinite-volume setting.}. See Figure~\ref{fig:simple-axis} below for the resulting phase diagram.

\paragraph{Example 1: The Euclidean case of integer lattices $\mathbb{Z}^n$.}
For $n = 1,2$, we have $ f_{oo}(\tfrac{1}{d}) = \infty $, so $ \sigma^*(\mathbb{Z}^n) = 0 $. This implies that adding any positive loop weight at $ o $ immediately localizes the URW. For $ n \geq 3 $, there is a non-trivial transient phase for $ 0 \leq \sigma < \sigma^*(\mathbb{Z}^n) $. At the critical value $ \sigma = \sigma^*(\mathbb{Z}^n) $, the stationary measure is proportional to the square of the Green function:
\[
\pi(x) \propto f_{ox}(\tfrac{1}{d})^2.
\]
It is well known that the green function admits the asymptotic
\[
f_{ox}(\tfrac{1}{d}) = c_n |x|_2^{2-n} + O(|x|_2^{-n}),
\]
where $ |x|_2 $ denotes Euclidean distance and $ c_n $ is a dimension-dependent constant~\cite[Theorem 4.3.1]{lawler2010random}. This shows that the URW at criticality is null recurrent for $ n = 3,4 $ and positive recurrent for $ n \geq 5 $.

\paragraph{Example 2: The hyperbolic case of the regular trees $T_d$.}
Due to the recursive structure of trees, explicit formulas are available. At the root of a tree $\mathcal{T}$, every closed walk decomposes uniquely into sequence of walks in the pending sub trees $\mathcal{T} \setminus \{x\}$. We denote the adjacency of the forest obtained by removing the site $x$ by $A|_{\mathcal{T} \setminus \{x\}}$. One has
\[
(1 - tA)^{-1}_{xx} = \frac{1}{1 - \sum_{y \sim x} tA_{xy} \cdot (1 - tA_{\mathcal{T} \setminus \{x\}})^{-1}_{yy} \cdot tA_{yx}}.
\]
More generally, for any walk from $x$ to $y$ along the geodesic $x = x_0, x_1, \dots, x_l = y$, we obtain
\[
(1 - tA)^{-1}_{xy} = (1 - tA)^{-1}_{xx} \cdot \prod_{i=1}^l \left[ tA_{x_{i-1}x_i} \cdot (1 - tA|_{\mathcal{T} \setminus \{x_{i-1}\}})^{-1}_{x_ix_i} \right].
\]

Let $\mathcal{T} = T_d$ be the infinite $d$-regular tree, and set $q := d - 1$. 
Removing any vertex leaves $d$ copies of the $q$-ary tree $\mathcal{T}_q$. Together with the above fomulae we get
\begin{align*}
  &f_{xx}(t) = \frac{1}{1 - dt^2 h(t)}, \\
  &h(t) = \frac{1}{1 - qt^2 h(t)} = \frac{1 - \sqrt{1 - 4qt^2}}{2qt^2}, \\
  &f_{xy}(t) = f_{xx}(t) \cdot (t h(t))^{d(x,y)}.
\end{align*}

Let $H = A + \sigma \mathbf{1}_{o}\mathbf{1}_{o}^*$ be the adjacency operator with a loop of weight $\sigma$ at the root $o$ and $(X_n)_{n}$ the associated URW. By spherical symmetry, the behavior is characterized by the local drift toward the root
\[
\mathbb{E}[d(o, X_{n+1}) - d(o, X_n) \mid X_n = x],
\]
as a function of $d(o, x)$. Using $h(\tfrac{1}{d}) = \tfrac{d}{d-1}$, the critical threshold above which $\rho_\sigma \neq d$
\[
\sigma^*(T_d) = \frac{d}{f_{oo}(\tfrac{1}{d})} = (d - 1) - \frac{1}{d - 1}.
\]
For $\sigma > \sigma^*(T_d)$, since $f_{ox}(\tfrac{1}{\rho_\sigma}) \propto \left(\frac{1}{\rho_\sigma}h(\tfrac{1}{\rho_\sigma}) \right)^{d(o,x)}$, the URW has a constant negative drift toward the root. For $\sigma < \sigma^*(T_d)$, the local drift approaches $\tfrac{d - 2}{d}$ as $x$ moves away from the root, matching the linear speed of the SRW.

We may also compute the threshold $\sigma_{\ell^2}$ (see Lemma~\ref{lmm:Rank1l2}) above which $r(H) \neq r(A)$:
\[
\sigma_{\ell^2}(T_d) = \frac{2\sqrt{d - 1}}{f_{oo}(\tfrac{1}{2\sqrt{d - 1}})} = \sqrt{d - 1} - \frac{1}{\sqrt{d - 1}}.
\]
This illustrates that for non-amenable graphs, such as $T_d$, there exists an intermediate phase $ \sigma_{\ell^2} < \sigma^* $ where the URW is \emph{not} the Doob transform of the top $ \ell^2 $-eigenvector of $H$, as the associated eigenvalue $r(H)$ remains strictly below $d$.

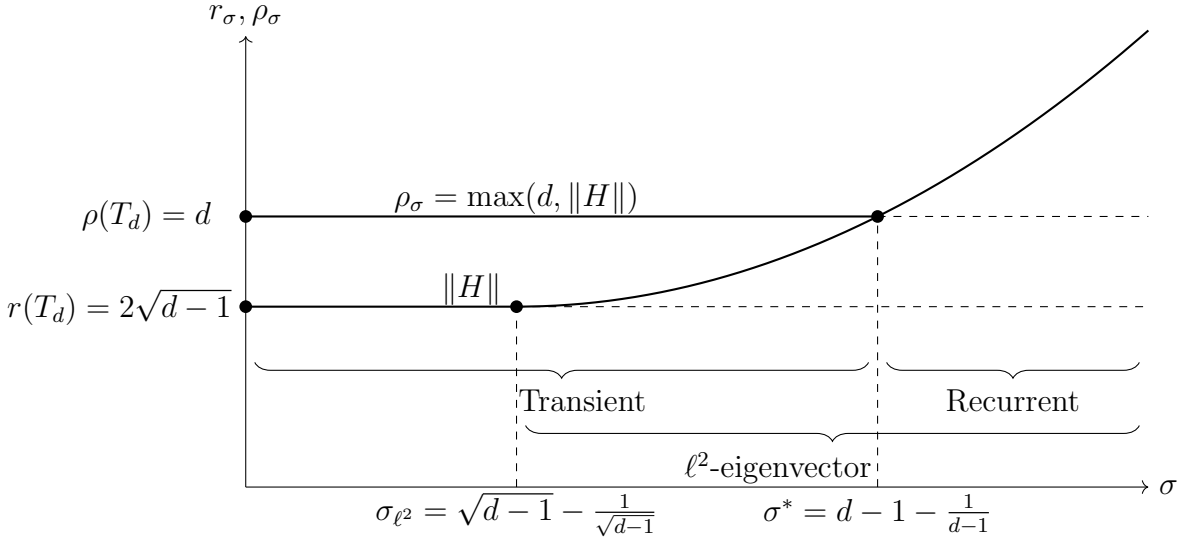
\begin{figure}[H]
    \hspace*{-3cm}
  \begin{tikzpicture}[scale=1.2]

\draw[->] (0,0) -- (10,0) node[right] {$\sigma$};
\draw[->] (0,0) -- (0,5) node[above] {$r_\sigma,\rho_\sigma$};

\draw[dashed] (7,3) -- (10,3) node[above left] {};
\draw[dashed] (3,2) -- (10,2) node[below right] {};

\draw[thick, domain=0:7, smooth, variable=\x] 
    plot ({\x}, {3 }) node[right] {};
\draw[thick, domain=0:3, smooth, variable=\x] 
    plot ({\x}, {2 }) node[right] {};
\draw[thick, domain=3:10, smooth, variable=\x] 
    plot ({\x}, {2 +0.0625*(\x-3)*(\x-3)}) node[right] {};

\fill (0,3) circle (2pt) ; 
\fill (0,2) circle (2pt); 
\fill (3,2) circle (2pt);
\fill (7,3) circle (2pt); 

\draw[dashed] (3,0) -- (3,2);
\draw[dashed] (7,0) -- (7,3);

\node at (3,-0.3) {$\sigma_{\ell^2}=\sqrt{d-1}-\tfrac{1}{\sqrt{d-1}}$};
\node at (7,-0.3) {$\sigma^*=d-1 - \frac{1}{d-1}$};
\node at (-1.1,3) {$\rho(T_d)=d$};
\node at (-1.4,2) {$r(T_d)=2\sqrt{d-1}$};
\node at (3,3.2) {$\rho_\sigma=\max(d,\| H \|)$};
\node at (2.5,2.2) {$\| H\|$};
\node at (7,3) {};

\draw[decorate,decoration={brace,amplitude=6pt,mirror}, yshift=-0.5pt]
(0.1,1.4) -- (7-0.1,1.4) node[midway,below=6pt] {$\quad$ Transient};
\draw[decorate,decoration={brace,amplitude=6pt,mirror}, yshift=-0.5pt]
(7+0.1,1.4) -- (10-0.1,1.4) node[midway,below=6pt] {Recurrent};

\draw[decorate,decoration={brace,amplitude=6pt,mirror}, yshift=-0.5pt]
(3+0.1,0.7) -- (10-0.1,0.7) node[midway,below=6pt] {$\ell^2$-eigenvector \qquad \quad\,};

\end{tikzpicture}
  \caption{\textit{Phase diagram} of the rank one perturbation for $T_d$ and its URW.}
  \label{fig:simple-axis}
\end{figure} 

\subsection{Proof of Theorem \ref{thm:Rank1}}

In case of rank one perturbation $H-A=\sigma\mathrm{1}_o\mathrm{1}_o^*$, generating function $f_{xy}^\sigma(t)=(1-tH)^{-1}_{xy}$ of walks in $G + \sigma V_{\{o\}}$ can be computed explicitly in terms of the analogous quantity in the unperturbed graph $G$. 
    
\begin{lemma}\label{lmm:Rank1comb}
The generating function $f^\sigma_{xy}(t)=(1-tH)_{xy}^{-1}$  of walk in $G + \sigma V_{\{o\}}$ satisfies
\[
f^\sigma_{xy}(t) = \sum_{n=0}^\infty (H^n)_{xy} t^n = f_{xy}(t) + \frac{f_{xo}(t) \cdot \sigma t \cdot f_{oy}(t)}{1 - \sigma t f_{oo}(t)}, \qquad x, y \in \lG
\]
In particular,
\begin{equation}\label{eq:rank1}
    f^\sigma_{ox}(t) = \frac{f_{ox}(t)}{1 - \sigma t f_{oo}(t)}.
\end{equation}
\end{lemma}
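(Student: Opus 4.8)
The plan is to exploit the resolvent identity for a rank-one perturbation, treating $t$ as a formal parameter (or a complex number with $|t|$ small enough that all the series converge absolutely, which is fine since $\|H\|, \|A\| \le D$). Write $H = A + \sigma \mathbf{1}_o \mathbf{1}_o^*$ and set $R(t) = (1-tA)^{-1}$, $R^\sigma(t) = (1-tH)^{-1}$, so that $f_{xy}(t) = \langle \mathbf{1}_x, R(t)\mathbf{1}_y\rangle$ and $f^\sigma_{xy}(t) = \langle \mathbf{1}_x, R^\sigma(t)\mathbf{1}_y\rangle$. The first step is the algebraic identity $1 - tH = (1-tA) - \sigma t \mathbf{1}_o\mathbf{1}_o^*$, from which the second resolvent identity gives $R^\sigma = R + \sigma t\, R\, \mathbf{1}_o \mathbf{1}_o^*\, R^\sigma$. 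Pairing this with $\mathbf{1}_x$ on the left and $\mathbf{1}_y$ on the right yields
\[
f^\sigma_{xy}(t) = f_{xy}(t) + \sigma t\, f_{xo}(t)\, f^\sigma_{oy}(t).
\]

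The second step is to solve for $f^\sigma_{oy}(t)$ by specializing $x = o$ in the displayed equation: $f^\sigma_{oy}(t) = f_{oy}(t) + \sigma t\, f_{oo}(t)\, f^\sigma_{oy}(t)$, hence $f^\sigma_{oy}(t) = f_{oy}(t)/(1 - \sigma t f_{oo}(t))$, which is exactly \eqref{eq:rank1}. Substituting this back into the first identity gives the full formula
\[
f^\sigma_{xy}(t) = f_{xy}(t) + \frac{f_{xo}(t)\cdot \sigma t\cdot f_{oy}(t)}{1 - \sigma t f_{oo}(t)}.
\]
The expansion $f^\sigma_{xy}(t) = \sum_n (H^n)_{xy} t^n$ is just the Neumann series for $R^\sigma(t)$, valid for $|t|$ small; since both sides are rational in $\sigma$ and analytic in $t$ near $0$, the identity then propagates to all $\sigma \ge 0$ and all $t$ in the common domain of analyticity by uniqueness of analytic continuation.

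Alternatively — and this is perhaps cleaner to present combinatorially — one can derive the same formula by a first-passage decomposition: every walk in $G+\sigma V_{\{o\}}$ from $x$ to $y$ is obtained from a walk in $G$ by inserting, at each visit to $o$, a geometric string of loop-steps (each weight $\sigma$) possibly interleaved with excursions in $G$ returning to $o$; bookkeeping the visits to $o$ produces the geometric factor $1/(1-\sigma t f_{oo}(t))$ and the two "legs" $f_{xo}(t)$ and $f_{oy}(t)$. I would likely include the resolvent computation as the main argument and mention the combinatorial reading as a remark.

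The only genuine subtlety — and hence the main obstacle — is justifying that the resolvent manipulations are legitimate: namely that $(1-tH)$ and $(1-tA)$ are invertible on $\ell^2$ for the relevant range of $t$, and that the matrix entries $f_{oo}(t)$ etc. are finite there so that dividing by $1-\sigma t f_{oo}(t)$ makes sense. For $|t| < 1/D$ this is immediate from the Neumann series. Extending to $t = 1/\rho_\sigma$ (the value actually used later) requires knowing $1 - \sigma t f_{oo}(t) \ne 0$ and that the series still converges; but since the lemma as stated is an identity of power series / meromorphic functions, it suffices to prove it for small $t$ and invoke analytic continuation, deferring the boundary behavior at $t = 1/\rho_\sigma$ to the later analysis where $\rho_\sigma$ is defined precisely.
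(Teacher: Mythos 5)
Your proposal is correct, and your primary argument (the second resolvent identity for the rank-one perturbation, followed by specializing $x=o$ to solve for $f^\sigma_{oy}$) is precisely what the paper calls ``the standard proof'' and deliberately does \emph{not} write out; the paper instead presents the combinatorial argument that you relegate to a remark, namely decomposing each walk from $x$ to $y$ according to the number $k\ge 1$ of traversals of the loop at $o$, which produces the two legs $f_{xo}(t)$, $f_{oy}(t)$ and the geometric series $\sum_{k\ge1}(\sigma t f_{oo}(t))^{k-1}$ summing to $1/(1-\sigma t f_{oo}(t))$. The two routes yield the same identity; the resolvent computation is shorter and makes explicit the analytic-continuation issue you correctly flag (prove the identity for $|t|<1/\|H\|$ and extend as an identity of power series with nonnegative coefficients, deferring behavior at $t=1/\rho_\sigma$ to the later analysis), while the paper's combinatorial decomposition works coefficient-by-coefficient on $(H^n)_{xy}$ and so never needs invertibility of $1-tA$ or $1-tH$ at all. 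Either presentation is acceptable; your plan as written is complete.
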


\begin{proof}
The standard proof relies on the second resolvent identity. Here we sketch an alternative proof using a simple combinatorial argument. We expand the generating function as a sum over paths $\gamma$ from $x$ to $y$
\[
f^\sigma_{xy}(t) = \sum_{\gamma} H(\gamma) t^{|\gamma|}. 
\]
We organize this sum according to the number $ k \geq 0 $ of visits to the loop at $ o $ along $ \gamma $. If $ k = 0 $, the path avoids the loop entirely. Its contribution is simply $ f_{xy}(t) = (1 - tA)^{-1}_{xy} $. Otherwise, we have $f_{xo}(t) \cdot \sigma t \cdot \left( \sigma t f_{oo}(t) \right)^{k - 1} \cdot f_{oy}(t)$ for $k \ge1$.

Summing over $ k = 1 $ gives the geometric series:
\[
f_{xy}^\sigma(t)=f_{xy}(t)+\sum_{k = 1}^\infty f_{xo}(t) \cdot \sigma t \cdot \left( \sigma t f_{oo}(t) \right)^{k - 1} \cdot f_{oy}(t) = f_{xy}(t)  + \frac{f_{xo}(t) \cdot \sigma t \cdot f_{oy}(t)}{1 - \sigma t f_{oo}(t)}.
\]
\end{proof}

We begin with some spectral results from rank-one perturbation theory. While these results are well known, we have not found them stated in this precise form in the literature, and we therefore include them here for completeness. Our proof of the spectral gap deviates slightly from the classical approach, as it does not rely directly on explicit computations of the resolvent of $H$ in the complex plane. 

\begin{lemma}[Spectral analysis of rank-one perturbation]\label{lmm:Rank1l2}
Let $r_\sigma = r(G+\sigma V_{\{o\}})$ and $r(A)=r(G)$.
\[
\sigma_{\ell^2}(G,o) := \frac{r(A)}{f_{oo}(\tfrac{1}{r(A)})} \in [0,\infty).
\]
Then $r_\sigma > r(A)$ if and only if $\sigma > \sigma_{\ell^2}(G,o)$. In this case, $r_\sigma$ is given by the inverse function $r_\sigma = \left[ t f_{oo}(t) \right]^{-1}(1/\sigma)$, and $H$ has a spectral gap $r_\sigma - r(A)$. Moreover, $H$ admits a unique top $\ell^2(G)$-eigenvector, proportional to the generalized Green function $F^\sigma_o(x):=f_{ox}(\tfrac{1}{r_\sigma})$.
\end{lemma}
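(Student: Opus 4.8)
The plan is to reduce all spectral questions about $H = A + \sigma \mathbf{1}_o\mathbf{1}_o^*$ to the behaviour of the scalar function $t \mapsto t f_{oo}(t)$ on the interval $(0, 1/r(A))$, exploiting the fact that the perturbation has rank one. First I would record the basic analytic facts about $g(t) := t f_{oo}(t) = \sum_{n\ge 0} (A^n)_{oo} t^{n+1}$: it is a power series with non-negative coefficients, strictly increasing and convex on $[0, 1/r(A))$, with $g(0) = 0$, and with $\lim_{t \uparrow 1/r(A)} g(t) = \tfrac{1}{r(A)} f_{oo}(1/r(A)) \in (0,\infty]$. Hence $g$ has a continuous, strictly increasing inverse defined on $[0, \sup g)$, and the equation $g(t) = 1/\sigma$ has a (unique) solution $t_\sigma \in (0, 1/r(A))$ precisely when $1/\sigma < \sup g = 1/\sigma_{\ell^2}(G,o)$, i.e. when $\sigma > \sigma_{\ell^2}(G,o)$ (reading $\sigma_{\ell^2} = 0$ when $f_{oo}(1/r(A)) = \infty$). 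This gives the claimed formula $r_\sigma = g^{-1}(1/\sigma)$ as a candidate, and shows $r_\sigma > r(A)$ in this regime since $t_\sigma < 1/r(A)$.

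Next I would identify $r_\sigma$ as a genuine eigenvalue with the stated eigenvector. Set $\lambda = 1/t_\sigma > r(A)$, so $\lambda \notin \Sigma_{\ell^2(G)}(A)$ and $(\lambda - A)^{-1}$ is a bounded operator on $\ell^2(G)$. By Lemma~\ref{lmm:Rank1comb} (or directly the second resolvent identity), for $|t| < 1/r(A)$ one has $f^\sigma_{ox}(t) = f_{ox}(t)/(1 - \sigma t f_{oo}(t))$; the denominator vanishes exactly at $t = t_\sigma$. I would show that $F^\sigma_o(x) := f_{ox}(1/r_\sigma) = \bigl((\lambda - A)^{-1}\mathbf{1}_o\bigr)(x)$ lies in $\ell^2(G)$ — this is where one uses $t_\sigma < 1/r(A)$ strictly: the function $\lambda \mapsto \|(\lambda-A)^{-1}\mathbf{1}_o\|_2^2 = \sum_x f_{ox}(1/\lambda)^2$ is the Stieltjes transform $\int \frac{d\mu_{oo}(x)}{(\lambda - x)^2}$ of the spectral measure $\mu_{oo}$ of $A$ at $\mathbf{1}_o$, finite for $\lambda > r(A)$. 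Then a direct computation gives $(H - \lambda)F^\sigma_o = (A - \lambda)(\lambda-A)^{-1}\mathbf{1}_o + \sigma \langle \mathbf{1}_o, F^\sigma_o\rangle \mathbf{1}_o = -\mathbf{1}_o + \sigma f_{oo}(1/\lambda)\mathbf{1}_o$, which is $0$ precisely because $\sigma f_{oo}(1/\lambda) = \sigma \lambda^{-1} \cdot \lambda f_{oo}(1/\lambda)\cdot \lambda \cdot \lambda^{-1}$... more cleanly, $\sigma t_\sigma f_{oo}(t_\sigma) = 1$ by definition of $t_\sigma$, so $\sigma f_{oo}(1/\lambda) = 1/t_\sigma = \lambda$. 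Hence $F^\sigma_o$ is an $\ell^2$-eigenvector with eigenvalue $r_\sigma = \lambda$; positivity of its entries is immediate from the power series $f_{ox}(t) = \sum_n (A^n)_{ox}t^n \ge 0$.

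It then remains to prove the two converse/uniqueness statements: (a) if $\sigma \le \sigma_{\ell^2}(G,o)$ then $r_\sigma = r(A)$ (no eigenvalue escapes), and (b) $r_\sigma$, when it exists above $r(A)$, is a simple eigenvalue and is the top of the spectrum, with no other spectrum in $(r(A), r_\sigma]$ — i.e. the spectral gap is exactly $r_\sigma - r(A)$. For (b), the standard rank-one argument: any eigenvalue $\lambda' > r(A)$ of $H$ must be a zero of $1 - \sigma t f_{oo}(t)$ at $t = 1/\lambda'$ (else $(\lambda' - A)$ is invertible and one can solve $(H-\lambda')\psi = 0$ only through the $\mathbf{1}_o$-direction, forcing the denominator to vanish); since $g(t) = tf_{oo}(t)$ is strictly monotone on $(0,1/r(A))$, there is at most one such $t$, giving simplicity and the absence of spectrum in $(r(A), r_\sigma)$ — and one checks $\Sigma(H) \subset \Sigma(A) \cup \{r_\sigma\}$ outside $[r(A), \|A\| + \sigma]$ by the same resolvent formula, so nothing sits above $r_\sigma$ either (alternatively, $r_\sigma = \rho(H) \ge r(H)$ combined with $r_\sigma$ being an eigenvalue forces $r_\sigma = r(H)$). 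For (a), if $\sigma \le \sigma_{\ell^2}$ then $g(t) < 1/\sigma$ for all $t < 1/r(A)$, so $1 - \sigma t f_{oo}(t) > 0$ throughout $(0,1/r(A))$, the resolvent formula for $f^\sigma$ extends analytically up to $1/r(A)$, and hence $\rho(H) = r(A)$; since $r(H) \le \rho(H) = r(A) \le r(H)$ (monotonicity of $H \ge A$ entrywise gives $r(H) \ge r(A)$), we get $r_\sigma = r(A)$. The main obstacle I anticipate is the square-summability claim $F^\sigma_o \in \ell^2(G)$ together with the sharpness of the dichotomy at the critical $\sigma$: this hinges on the strict inequality $t_\sigma < 1/r(A)$ versus the boundary case, and on correctly relating $\sum_x f_{ox}(1/\lambda)^2$ to $\int (\lambda - x)^{-2} d\mu_{oo}$ via the spectral theorem — the bookkeeping between "walk-counting" generating functions and $\ell^2$-spectral data is where care is needed, especially to handle $f_{oo}(1/r(A)) = \infty$ uniformly with the finite case.
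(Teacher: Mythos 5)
Your proposal is correct, and its first half — reducing everything to the scalar function $g(t)=tf_{oo}(t)$, locating the threshold $\sigma_{\ell^2}$ as the reciprocal of $\sup_{t<1/r(A)}g(t)$, and reading off $r_\sigma$ from the unique root of $1-\sigma tf_{oo}(t)$ — is exactly what the paper does via the singularity of $f^\sigma_{oo}(t)=f_{oo}(t)/(1-\sigma tf_{oo}(t))$ from Lemma~\ref{lmm:Rank1comb}. Where you diverge is the spectral-gap and eigenvector part. You take the classical rank-one route: the Aronszajn--Krein resolvent formula shows $\Sigma(H)\setminus\Sigma(A)$ is contained in the zero set of $1-\sigma\langle\mathbf{1}_o,(z-A)^{-1}\mathbf{1}_o\rangle$, you construct the eigenvector explicitly as $(\lambda-A)^{-1}\mathbf{1}_o$ (up to the harmless factor $\lambda$ relating this to $f_{o\cdot}(1/\lambda)$), and you certify $\ell^2$-membership by the Stieltjes-transform bound $\int(\lambda-x)^{-2}\,d\mu_{oo}(x)\le(\lambda-r(A))^{-2}$. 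The paper deliberately avoids the explicit resolvent computation (it says so) and instead runs a Weyl-sequence argument: for $\lambda\in\Sigma(H)\setminus[-r(A),r(A)]$ it extracts $\varphi_n(o)\to c\neq 0$ and derives $\sigma^{-1}=(\lambda-A)^{-1}_{oo}$, which pins $\lambda=r_\sigma$ without ever writing down $(H-z)^{-1}$. Your version buys a fully explicit, quantitative eigenvector (and makes the $\ell^2$ issue transparent, which is where you rightly flagged the only delicate point — strictness of $t_\sigma<1/r(A)$); the paper's version is softer and generalizes more readily to perturbations where no closed resolvent formula is convenient. One cosmetic remark: your "uniqueness from monotonicity of $g$" gives at most one eigenvalue above $r(A)$, but simplicity of the eigenspace should be stated as coming from the relation $\psi=\sigma\psi(o)(\lambda-A)^{-1}\mathbf{1}_o$, which determines $\psi$ up to the scalar $\psi(o)$ — you gesture at this and the paper makes it explicit.
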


\begin{proof} Note that the spectral radius is continuous, since $r_\sigma-r(A) \leq \| H-A\| = \sigma$. As $H$ is positive, irreducible, and aperiodic when $\sigma > 0$, we have $r_\sigma = \lim \sqrt[n]{(H^n)_{xy}} = \|H\|$.
From Lemma~\ref{lmm:Rank1comb}, one has
\[
f_{oo}^\sigma(t) = \sum_n (H^n)_{oo} t^n = \frac{f_{oo}(t)}{1 - \sigma t f_{oo}(t)}
\]
The first singularity on the positive real axis corresponds to $r_\sigma$. If $\sigma$ is small, this is $1/r(A)$, otherwise, there is a solution $t=1/r_\sigma<1/r(A)$ of
$$1-\sigma tf_{oo}(t)=0,$$
this happens only if $\sigma>(\frac{1}{r(A)}f_{oo}(\tfrac{1}{r(A)}))^{-1}=: \sigma_{\ell^2}$. This conclude the claim on the spectral radius.

We now assume that $\sigma>\sigma_{\ell^2}$. To prove existence of the spectral gap, let $\lambda \in \Sigma_{\ell^2(G)}(H)\setminus [-r(A),r(A)]$. Consider a Weyl sequence associated to $\lambda$. That is, a sequence $\varphi_n \in \ell^2(G)$ with $\|\varphi_n \|=1$, and $\psi_n :=(H - \lambda) \varphi_n$ is such that $\| \psi_n \| \to 0$. 
\[(H - \lambda)\varphi_n = (A - \lambda)\varphi_n + \sigma \varphi_n(o) \mathbf{1}_o = \psi_n \to 0.
\]
If $\varphi_n(o) \to 0$, then $(A - \lambda)\varphi_n \to 0$, contradicting that $\lambda \notin \Sigma_{\ell^2(G)}(A)$. So that by compactness, we may assume that $\varphi_n(o) \to c \neq 0$. As $\lambda$ is away from the spectrum of $A$, we can consider the bounded inverse of $A-\lambda$
\[\varphi_n = (\lambda - A)^{-1} (\sigma \varphi_n(o) \mathbf{1}_o - \psi_n).
\]
Taking inner products with $\mathbf{1}_o$, passing to the limit, we get
\[c = c \sigma (\lambda - A)^{-1}_{oo} \Rightarrow \sigma^{-1} = (\lambda -A )^{-1}_{oo}=\frac{1}{\lambda} f_{oo}(\tfrac{1}{\lambda}).\]
Hence, $\lambda = r_\sigma$ and $r_\sigma$ is an isolated point of the spectrum. \\

Assuming $\varphi \in \ell^2(G)$ verifying $(H-r_\sigma) \varphi=0$, it is easy to see that
\[\varphi = \varphi(o)\frac{\sigma}{r_\sigma} (1 - A/r_\sigma)^{-1} \mathbf{1}_o \Rightarrow \varphi(x) \propto f_{ox}(\tfrac{1}{r_\sigma}).\]
\end{proof}

\begin{proof}[Proof of Theorem \ref{thm:Rank1}]

In the weighted graph $G+\sigma V_{\{o\}}$, every node has (weighted) degree $d$, except the root $o$ whose degree is $d+\sigma$. Any path from $x$ has therefore $d$ or $d+\sigma$ ways to be extended by one step. This gives a recursion for the walk count
$$W_n^\sigma(x) = (d+\sigma)H_{xo}^{n-1} + d\sum_{y\not =o } H_{xy}^{n-1} = d W_{n-1}^\sigma(x) + \sigma H_{xo}^{n-1} =  d^n + \sigma \sum_{k=0}^{n-1} H_{ox}^{k}d^{n-1-k},$$ 
therefore
$$W_n^\sigma(x) = d^n\left( 1+ \frac{\sigma}{d} \sum_{k=0}^{n-1} \frac{H_{ox}^{k}}{d^k}\right).$$
If $r_\sigma\leq d$ one has
$$\sqrt[n]{W_n^\sigma(x)}=d\left(1+\frac{\sigma}{d}O(n)\right)^{1/n} \to d,$$
otherwise $r_\sigma>d$, and by the previous lemma $H_{ox}^{k} = cr_\sigma^k + O(d^k)$ for some constant $c=c_x \neq 0$ depending on the top eigenvector. 
$$\sqrt[n]{W_n^\sigma(x)}=d\left(1+\frac{\sigma}{d}\left(\frac{r_\sigma}{d}\right)^{n-1}+O(n)\right)^{1/n} \to r_\sigma.$$
We obtain the following formula for exponential growth of walk 
$$\rho_\sigma = \lim_{n} \sqrt[n]{W_n^\sigma(x)} = \max\{d, r_\sigma \}.$$
Note that the transition occurs at
$$r_\sigma>d \iff \sigma > \sigma^*(G,o) := \frac{d}{f_{oo}(\tfrac{1}{d})} \geq \frac{r(A)}{f_{oo}(\tfrac{1}{r(A)})} = \sigma_{\ell^2}(G,o).$$
This conclude the discussion for the walks growth $\rho_\sigma$. We need to show that the ratio limit $W_{n-1}(y)/W_n(x)$ exists.\\

There are several cases.

\textbf{Case 1 ($\sigma > \sigma^*(G,o)$):} Then $r_\sigma = \rho_\sigma > d$. Using $\sigma^*(G,o) \geq \sigma_{\ell^2}(G,o)$, Lemma \ref{lmm:Rank1l2} and $d \geq r(A)$ we obtain that $H_{ox}^k \sim \varphi^\sigma(o) \varphi^\sigma(x) \rho_\sigma^k + O(d^k)$, where we introduced the normalized eigenvector $\varphi^\sigma \propto f_{ox}(\tfrac{1}{\rho_\sigma})$. Therefore,
\[W_n^\sigma(x) \sim \varphi^\sigma(o) \varphi^\sigma(x) \cdot d^{n-1} \cdot \frac{\sigma}{d} \cdot \frac{(\rho_\sigma/d)^n - 1}{\rho_\sigma/d - 1} \Rightarrow \frac{W_{n-1}^\sigma(y)}{W_n^\sigma(x)} \to \frac{\varphi^\sigma(y)}{\rho_\sigma \varphi^\sigma(x)}.
\]

\textbf{Case 2 ($0 < \sigma < \sigma^*(G,o)$):} Since $ \sigma^*(G,o)>0$ , the SRW on $G$ is transient and $f_{ox}(\tfrac{1}{d})$ converges. Since $\sigma<\sigma^*(G,o)$ the numerator in $\eqref{eq:rank1}$ is not singular and $f_{ox}^\sigma(\tfrac{1}{d})$ converges. From Lemma~\ref{lmm:Rank1comb}:
\[\frac{W_{n-1}^\sigma(y)}{W_n^\sigma(x)} \to \frac{1}{d}\frac{ 1+ \frac{\sigma}{d} \sum_{k=0}^{\infty} \frac{H_{oy}^{k}}{d^k} }{ 1+ \frac{\sigma}{d} \sum_{k=0}^{\infty} \frac{H_{ox}^{k}}{d^k} }=  \frac{1}{d} \cdot \frac{1 - \tfrac{\sigma}{d} f_{oo}(\tfrac{1}{d}) + \tfrac{\sigma}{d} f_{oy}(\tfrac{1}{d})}{1 - \tfrac{\sigma}{d} f_{oo}(\tfrac{1}{d}) + \tfrac{\sigma}{d} f_{ox}(\tfrac{1}{d})}.
\]

\textbf{Case 3 ($\sigma = \sigma^*(G, o)>0$):} the critical case  is more involved. Here $f_{ox}(1/d)$ is finite but $f^\sigma_{ox}(1/d) = \infty$ because the numerator in $\eqref{eq:rank1}$ is precisely singular at $t=\tfrac{1}{d}$. To analyze this, we adapt the classical renewal theory of Markov process to the matrix $H$, by decomposing paths in terms of the first visit to $o$.

By time reversal of path, the coefficient of the symmetric matrix $(H^n)_{ox}$ equals the sum over all paths $\gamma$ from $x$ to $o$ of length $n$:
\[
(H^n)_{ox} = \sum_{\gamma : x \to o, |\gamma| = n} H(\gamma).
\]
Let $k \leq n$ be the first time $\gamma$ visits $o$. Each such path decomposes uniquely as $\gamma = \tilde{\gamma}_1 \gamma_2$, where $\tilde{\gamma}_1$ is a path in $G$ from $x$ to $o$, and $\gamma_2$ is a closed path starting and ending at $o$ in $G+\sigma V_{\{o\}}$.

Before reaching the vertex $o$ the matrix coefficient of $H$ and $A$ along the path coincide. We might replace $H$ by $A$ along $\tilde{\gamma}_1$ and get:
\[
\frac{(H^n)_{ox}}{d^n} = \sum_{k=1}^n \left( \sum_{\tilde{\gamma}_1} \frac{A(\tilde{\gamma}_1)}{d^k} \sum_{\gamma_2} \frac{H(\gamma_2)}{d^{n-k}} \right).
\]
The first sum is the probability that SRW from $x$ hits $o$ for the first time at time $k$, denoted $q_{xo}(k)$. The second term is $H^{n-k}_{oo}/d^{n-k}$, denoted $h(n-k)$. Thus,
\[
\frac{(H^n)_{ox}}{d^n} = \sum_{k + l = n} q_{xo}(k) h(l).
\]
Since $\sum_{k } q_{xo}(k) \leq 1$ and $\sum_{l } h(l) = f^\sigma_{oo}(1/d) = \infty$. We now compute
\[
\sum_{k=0}^{n-1} \frac{H^k_{ox}}{d^k} = \sum_{k=0}^{n-1} \sum_{l=0}^k q_{xo}(l) h(k - l) = \sum_{l=0}^{n-1} q_{xo}(l) \sum_{m=0}^{n-1-l} h(m).
\]
Letting $g(l,n) := \sum_{m = 0}^{n-1-l} h(m) / \sum_{m = 0}^{n-1} h(m)$, we have $g(l,n) \to 1$ as $n \to \infty$ and
\[
\frac{1}{\sum_{k=0}^{n-1} h(k)} \sum_{k=0}^{n-1} \frac{H^k_{ox}}{d^k} = \sum_{l=0}^{n-1} q_{xo}(l) g(l,n) \to \sum_{l=0}^\infty q_{xo}(l),
\]
by the monotone convergence theorem. 
Since $W_n^\sigma(x) \sim \frac{\sigma}{d} \sum_{k = 0}^{n-1} \frac{H^k_{ox}}{d^k}$ we get
\[
\frac{W_{n-1}^\sigma(y)}{W_n^\sigma(x)} \sim \frac{1}{d} \cdot \frac{\sum_{k=0}^{n-1} \frac{H_{oy}^k}{d^k}}{\sum_{k=0}^{n-1} \frac{H_{ox}^k}{d^k}} \to \frac{1}{d} \cdot \frac{\sum_{k=0}^\infty q_{yo}(k)}{\sum_{k=0}^\infty q_{xo}(k)} = \frac{1}{d} \cdot \frac{f_{oy}(1/d)}{f_{ox}(1/d)},
\]
using $f_{ox}(1/d) = f_{oo}(1/d) \sum_{k=0}^\infty q_{xo}(k)$ by the strong Markov property of SRW from $x$ on $G$, at the first hitting time of $o$ .

This concludes the proof that the URW exists for all $\sigma \geq 0$.
The transition probabilities take the form
\[
u_{xy}^\sigma = \frac{H_{xy}}{\rho_\sigma} \cdot \frac{\varphi^\sigma(y)}{\varphi^\sigma(x)}.
\]
The coefficients $u^\sigma_{xy}$ depend continuously on $\sigma$ at the transition $\sigma = \sigma^*(G,o)$. To determine whether the URW is recurrent or transient, we examine the Green function of the Markov operator $P^\sigma$:
\[
(1 - P^\sigma)^{-1}_{xx} = \varphi^\sigma(x)^{-1}  \cdot \sum_{n=0}^\infty \rho_\sigma^{-n} (H^n)_{xx}  \cdot \varphi^\sigma(x
) = f^\sigma_{xx}(1/\rho_\sigma).
\]
In particular,
\[
(1 - P^\sigma)^{-1}_{oo} = \frac{f_{oo}(1/\rho_\sigma)}{1 - \tfrac{\sigma}{\rho_\sigma} f_{oo}(1/\rho_\sigma)}.
\]
The URW is transient when this expression is finite, i.e., when $\sigma < \sigma^*(G,o)$. For $\sigma \geq \sigma^*(G,o)$, the stationary distribution is
\[
\pi(x) \propto \left( f_{ox}(1/\rho_\sigma) \right)^2 = \left( (1 - A/\rho_\sigma)^{-1}_{ox} \right)^2.
\]
When $\sigma > \sigma^*(G,o)$, we have $\rho_\sigma > d$ and exponential decay in the distance $d(o,x)$:
\[
f_{ox}(1/\rho_\sigma) = \sum_{n = d(o,x)}^\infty (d/\rho_\sigma)^n = \left( \frac{d}{\rho_\sigma} \right)^{d(o,x)} \cdot \frac{1}{1 - d/\rho_\sigma}.
\]
Finally, at the critical point $\sigma = \sigma^*(G, o)$, $\rho_{\sigma^*(G,o)}=d$, the URW is positive recurrent if and only if $f_{ox}(\tfrac{1}{d}) \in \ell^2(G)$.
\end{proof}

\section{Environmental limit and the canopy tree.}\label{sec:canopy}

In this section we discuss the canopy tree and prove Theorem \ref{thm:canopy}. 

On a graphing (and therefore on any unimodular random rooted graph), it was shown in \cite{abert2024co} that the walk growth exists almost surely. Since the definition of the random walk operator $P$ on a graphing does not depend on the choice of the underlying measure, one cannot expect an equally general result for the entropy rate associated with $P$.

However, whenever the environment seen from the random walk stabilizes, it induces a stationary ergodic process for the increments of the walk, and standard arguments imply that the entropy per step converges. This was claimed in Proposition \ref{prop:envlimit}, and we now turn to its proof.

\begin{proof}[Proof of Proposition \ref{prop:envlimit}] 
Given $o\in (X,\mu)$, the random walk on the graphing produces a walk on the leaf, denoted $X_n^P(o)$. Since
\[
X_{n+1}^P(o)=X_1^P\bigl(X_n^P(o)\bigr),
\]
the environmental limit satisfies the stationarity equation in distribution
\[
(G',P',x)\;=_d\;(G',P',X_1^{P'}(x)).
\]
Because degrees are uniformly bounded, $H_P^1(x)$ is bounded and therefore integrable. Hence, by weak convergence of $(G,P,x_n)$ to $(G',P',x)$ and bounded convergence of the one-step entropy functional,
\[
\lim_{n\to\infty}\frac{1}{n}H_P^n(o)
= E_{(G',P',o')\sim\mu'}\,H_{P'}^{1}(o')
= -\int_{\mathbb G_{\bullet}^{D,M}}\sum_{y\sim x} p_{xy}\log p_{xy}\,\mu'(\mathrm d(G,P,x)).
\]

Now assume that the walk on the graphing admits a $P$-stationary probability measure $\lambda\ll\mu$. For $o'\in(X,\lambda)$, stationarity gives
\[
(G,P,X_n^P(o'))=_d(G,P,o') \quad\text{for all }n.
\]
Since the environmental limit exists and is independent of the starting point, we also have
\[
(G,P,X_n^P(o'))\;\Rightarrow\;(G',P',x).
\]
Therefore $(G',P',x)=_d(G,P,o')$.
\end{proof}

With this proposition at hand, we turn to the analysis of the canopy tree. This example makes the underlying mechanism especially transparent, as the limiting environment can be described explicitly. Fix a geodesic ray $o x_1 x_2 x_3 \cdots$ converging to a boundary point $\xi \in \partial T_d$ of the regular tree. The associated Busemann function is
\[
b_\xi(y)=\lim_{n\to\infty}\bigl(d(y,x_n)-n\bigr).
\]
The \emph{canopy tree} is the induced subgraph given by the horoball
\[
CT_d=\{x\in T_d:\; b_\xi(x)<0\}.
\]
Note that $CT_d$ contains the points $x_i$ and they converge to the unique boundary point $\xi$. Every non-leaf vertex of $CT_d$ has degree $d$. For $x\in CT_d$, we denote by 
\[
|x|\in\{1,2,\ldots\}
\]
the minimal distance from $x$ to a leaf. Note that this is simply $-|b_\xi(x)|$, and that one has $|x_k|=k$ along the geodesic towards $\xi$. The automorphism group $\mathrm{Aut}(CT_d)$ acts transitively on each level $\{x:\ |x|=k\}$. From a rooted graph perspective, one can project the graph on the set of levels
\[
\mathbb N=\{1,2,\ldots\},
\]
on which the adjacency operator $A_{CT_d}$ becomes the tridiagonal operator $B$ defined by
\[
B_{k,k+1}=1,\qquad B_{k+1,k}=d-1.
\]
Although $B$ is not symmetric, it becomes self-adjoint on the weighted space
$\ell^2(\mathbb N,p)$ with
\[
p_k=\frac{d-2}{(d-1)^k}.
\]
This distribution is the limiting distribution of the distance from a leaf in balls of increasing radius in $T_d$.

\begin{proof}[Proof of Theorem~\ref{thm:canopy}]
Let $D=\operatorname{Diag}\bigl((\sqrt{d-1})^k\bigr)$ and let $J$ be the standard Jacobi
operator on $\mathbb N$ with $J_{k,\ell}=1_{\{|k-\ell|=1\}}$. A direct
calculation shows
\[
B=\sqrt{d-1}\, D\, J\, D^{-1}.
\]
It is well known that the sine transform, which identifies $\mathbf{1}_{k}$ with 
$\sqrt{2/\pi}\,\sin(k\theta)$, diagonalizes $J$, which becomes the multiplication operator
$2\cos\theta$ on $L^2(0,\pi)$; see e.g.\ \cite{simon2005orthogonal}. This allows an integral representation of the number of walks. Let $x\in CT_d$ be at level $|x|=k$. Then
\begin{align*}
W_n(x)
&=\sum_{y\in V(CT_d)} (A_{CT_d}^n)_{xy}
=\sum_{\ell=1}^\infty (\sqrt{d-1})^{\,n+k-\ell} J^n_{k,\ell} \\
&=\frac{2^{\,n+1}(\sqrt{d-1})^{\,n+k}}{\pi}
   \int_0^\pi (\cos\theta)^n\sin(k\theta)\,
   \sum_{\ell=1}^\infty \frac{\sin(\ell\theta)}{(\sqrt{d-1})^\ell}\,d\theta.
\end{align*}
Set
\[
I_n(k)=\int_0^\pi e^{n g(\theta)} h_k(\theta)\,d\theta,
\qquad g(\theta)=\log|\cos\theta|,
\]
and
\[
h_k(\theta)
=\operatorname{sign}(\cos(n\theta))\,
  \sin(k\theta)\,
  \frac{\sqrt{d-1}\,\sin\theta}{d-1-2\sqrt{d-1}\cos\theta+1}.
\]
Then
\[
\frac{W_{n-1}(x)}{W_n(x)} = \frac{1}{2}\,\frac{I_{n-1}(k)}{I_n(k)}.
\]
Only neighborhoods of $0$ and $\pi$ contribute to the large-$n$ asymptotics of $I_n$. We have
\[
h_k(\theta) = \frac{k\,\theta^2}{(\sqrt{d-1}-1)^2 }+ O(\theta^3),
\qquad \theta\to 0,
\]
and
\[
h_k(\theta)
= \frac{(-1)^{k+n} k\,(\theta-\pi)^2}{(\sqrt{d-1}+1)^2 }
  + O((\theta-\pi)^3),
\qquad \theta\to\pi.
\]
A classical Laplace analysis gives
\[
I_n(k)\sim \frac{\sqrt{\pi}}{\sqrt2\,n^{3/2}}
\left(
\frac{\sqrt{d-1}\,k}{(\sqrt{d-1}-1)^2}
+\frac{\sqrt{d-1}\,k\,(-1)^{\,n+k}}{(\sqrt{d-1}+1)^2}
\right).
\]

Thus, if $y\sim x$ is at level $k+1$, we obtain
\[
\lim_{n\to\infty}\frac{W_{n-1}(y)}{W_n(x)}
= \frac{k+1}{2k} = u_{xy}.
\]
Similarly, if $k\ge2$ and $y$ is one of the $d-1$ neighbors at level $k-1$, we obtain
\[
\lim_{n\to\infty}\frac{W_{n-1}(y)}{W_n(x)}
=\frac{1}{d-1}\,\frac{k-1}{2k} = u_{xy}.
\]
This shows the existence of the URW. It is the Doob transform of simple random walk with respect to
\[
\rho=2\sqrt{d-1}, \qquad F(x)=F(|x|)=|x|(\sqrt{d-1})^{|x|}.
\]
Since $F\notin \ell^2(\mathbb N,p_k)$, the stationary measure
$\pi_k=F(k)^2 p_k$ is infinite, so the URW admits no finite stationary
distribution on $CT_d$.  Projecting the walk to the level sets produces the
birth--death chain
\[
u_{k,k\pm1}=\frac{k\pm1}{2k},\qquad k\ge1,
\]
which is transient. In fact, it goes to infinity at speed $\sqrt{n}$. This implies $|X_n|\to\infty$ almost surely for every
initial condition $X_0 \in CT_d$.  To describe the environment seen from the walk, we reroot
$(CT_d,\mathrm{URW})$ at $X_n$.  As $|X_n|\to\infty$, the rooted graph
$(CT_d,\textmd{URW},X_n)$ converges locally. The environmental limit is $(T_d,P^\xi,o)$, where $P^\xi$ moves toward $\xi$ with probability $1/2$ and otherwise steps to one of
the $d-1$ remaining neighbors with probability $1\big/2(d-1)$. The entropy rate of the URW equals the
expected one-step entropy on the environmental limit
\[
h_{\mathrm{URW}}
=H^1_{P^\xi}(o)
=\log(2\sqrt{d-1})
=h_{\mathrm{top}}(CT_d),
\]
The theorem holds. 
\end{proof}

\section{URW on Graphings \label{sec:URWgraphingloopproc}} 

In this section we prove Theorem \ref{thm:graphing} and Theorem \ref{thm:loopproc}. 

When one adapts Perron-Frobenius theory to infinite graphs, the main obstacle is that the constant function $\mathbf{1}_{V(G)}$ is no longer an element of the Hilbert space $\ell^{2}(G)$. This prevents the use of spectral tools to analyze the convergence of the ratio limits of the walk growth
$$
W_n(x)=\langle \mathbf{1}_{x}, A^n \mathbf{1}_{V(G)} \rangle.
$$
Remarkably, for a graphing $\lG=(X,\mu,A)$, the situation is much simpler. The constant function $\mathbf{1}_X$ lies in the domain of the adjacency $\mathrm{A}_\lG$, a self adjoint operator on $L^{2}(\lG)$. In the presence of a spectral gap, we shows that the power iteration converges and the URW exists just as in the finite case.

\begin{proof}[Proof of Theorem \ref{thm:graphing}]

We begin with the topological entropy claim. Consider graphing with bounded weight. In \cite[Theorem 5.5]{abert2024co}, it is shown using a sub-multiplicative argument that the limit $\lim_{n}(W_{n}(o))^{1/n}$ exists for almost all $o\in(X,\mu)$. Although the proof is presented for simple graphings, the very same argument applies to the weighted case without modification. This limit is invariant under moving the root, and by ergodicity it is an almost sure constant.

The fact that this constant coincides with the norm of the adjacency operator follows from two observations. Since $A_G$ preserves positivity, one has the Gelfand type formula
$$
\lVert \mathrm{A}_{\lG}\rVert=\lim_{n}\lVert \mathrm{A}_{\lG}^n\mathbf{1}_X\rVert^{1/n},
$$
where $\mathbf{1}_X$ is the constant function in $L^{2}(\lG)$. Furthermore,
$$
(\mathrm{A}_{\lG}^n\mathbf{1}_X)(o)=W_n(o).
$$
Thus the topological entropy exists for almost every instance of the root, and
$$
h_{\mathrm{top}}((G,o))=\log\lVert \mathrm{A}_{\lG}\rVert.
$$
We denote $\rho_\lG=\lVert \mathrm{A}_{\lG}\rVert$.

\smallskip

By assumption, the spectrum of the graphing adjacency operator verifies 
\[
\Sigma_{L^2(X,\mu)}(A_G) \subset [-\lambda,\lambda] \cup\\
\{\rho_\lG \},
\]
for some constant $\lambda<\rho_\lG$. By positivity of $A_{\mathcal G}$ and ergodicity of $\mathcal G$, the eigenspace corresponding to $\rho_{\mathcal G}$ is one-dimensional. Let $F$ be the associated nonnegative and $L^{2}$-normalized eigenvector.
The identity $A_\lG F=\rho_\lG F$ in $L^2(\lG)$ translates leaf-wise on $\ell^2((G,o))$ to\footnote{We stress that $F$ is usually not in $\ell^2((G,o))$; it is therefore not an eigenvector of the leaf wise adjacency operator $A_{(G,o)}$ but only a generalized eigenfunction, in the sense of locally finite graphs.}
$$
\sum_{y\sim x}A_{xy}F(y)=\rho_\lG F(x),\qquad x,y\in(G,o).
$$
From this, the event $F(o)>0$ is invariant by moving the root and by ergodicity, we obtain that $F(o)$ is positive for almost every $o\in (X,\mu)$. We claim that the URW exists on $(G,o)$ and corresponds to the Doob transform with respect to $F$. The operator $\rho_\lG^{-n} A_\lG^n $ converges (in strong topology) to the spectral projection onto the eigenspace generated by $F$. This implies the $L^2(\lG)$-convergence
$$
\rho_\lG^{-n} W_n(\cdot)=\rho_\lG^{-n} A_\lG^n \,\mathbf{1}_X \;\xrightarrow[n]{L^2(\lG)}\; \langle \mathbf{1}_X, F\rangle_{L^2(\lG)}\,F(\cdot).
$$
The spectral gap hypothesis gives
\begin{equation}\label{eq:convestimate}
\left\lVert \rho_\lG^{-n} W_n - \langle \mathbf{1}_X,F\rangle_{L^2(\lG)} F \right\rVert
\leq \left(\tfrac{\lambda}{\rho_\lG}\right)^n,
\end{equation}
so by a Borel-Cantelli type argument we obtain the point-wise convergence
$$ \lim_n \rho_\lG^{-n} W_n(o) = F(o), \qquad \textmd{$o\in (X,\mu)$ a.e.}$$
This implies the existence of the ratio limit of walk growth. For $x,y\in X$, we set
$$u_{xy}=\lim_{n}\frac{A_{xy}W_{n-1}(y)}{W_n{x}}=\frac{A_{xy}F(y)}{\rho_\lG F(x)},$$
if $x\sim y$ and zero otherwise. The kernel $U=(u_{xy})$ defines a random walk on the (weighted) graphing $\lG =(X,\mu,A)$, whose leaf-wise action corresponds to the $\mathrm{URW}$ on each leaf $(G,o)$. This process is stationary with respect to the measure
$$ \pi(\mathrm{d}x) = F(x)^2\,\mu(\mathrm{d}x).$$ 
To see this, we remark that for any bounded Borel function $\varphi$,
\begin{align*}
\int_X (U \varphi)\,\mathrm{d}\pi
  &= \int_X (U\varphi)(x)\,F(x)^2\,\mu( \dd x) \\
  &= \frac{1}{\rho_\lG}\int_X F(x)\,A_\lG(F\varphi)(x)\,\mu(\dd x) \\
  &= \frac{1}{\rho_\lG}\int_X (A_\lG F)(x)\,F(x)\varphi(x)\,\mu(\dd x) \\
  &= \int_X \varphi(x)\,F(x)^2\,\mathrm{d}\mu( \dd x)
   = \int_X \varphi\,\mathrm{d}\pi.
\end{align*}
We denote $(X_n^U(o))_{n\ge0}$ the uniform random walk on the leaf $(G,o)$ starting at $X_0^U(o)=o$. The $n$-th step $X_n^U(o)$ is distributed following the measure $U^n(o,\dd x)$. By definition
$$H_{U}^n(o) = H(X_1^U(o),X_2^U(o)\ldots X_n^U(o)) = \sum_{i=0}^{n-1} \int_{X} H_{U}^1(x_k) U^{k}(o,\dd x_k).$$
If $o^\prime$ is distributed following $\pi$, we have the $L^1$ and almost sure convergence
$$ \lim_n \frac{1}{n} \sum_{i=1}^n -u_{X_{i-1}^U(o^\prime)X_i^U(o^\prime)}\log \left( \frac{u_{X_{i-1}^U(o^\prime)X_i^U(o^\prime)}}{A_{X_{i-1}^U(o^\prime)X_i^U(o^\prime)}} \right) \to -\int_{X} \sum_{y\sim x}\log \left(\frac{u_{xy}}{A_{xy}} \right)u_{xy} \pi( \dd x).$$
This is a standard application of Birkhoff’s Ergodic theorem, We refers to \cite[proposition 2.1]{benjamini2012ergodic} in case of stationary sequence of rooted graph with respect simple random walk. We get that
$$h_{U}(o^\prime)=\lim_n \tfrac{1}{n}H_{U}^n(o^\prime) = \int_X H_{U}^1(x)\pi(\dd x), \qquad \textmd{$o^\prime\in (X,\pi)$ a.e.}$$
The fact that the one step entropy production is the topological entropy follows from
$$-\int_{X} \sum_{y\sim x}\log \left(\frac{u_{xy}}{A_{xy}} \right)u_{xy} \pi( \dd x) = \log(\rho_\lG) -\int_{X} \sum_{y\sim x}\log \left(\frac{F(y)}{F(x)} \right)u_{xy} \pi( \dd x) = h_{\mathrm{top}}.$$
\medskip

Let $P$ be any random walk on the graphing $\lG=(X,\mu,A)$, such that the entropy rate exists for almost all $o \in (X,\mu)$. We assume that $P$ has a stationary measure $\nu$ absolutely continuous with respect to $\mu$. We denotes its kernel $p_{xy}$ its kernel. For each $x$, define the one step Kullback--Leibler divergence 
\begin{align*}
g(x)&=\sum_{y\sim x} -\log \left(\frac{{u_{xy} }}{p_{xy}} \right) p_{xy}\\
&=\sum_{y\sim x} \log \bigl(\frac{ p_{xy} }{ A_{xy} }\bigr)p_{xy}
-\sum_{y\sim x} \log \bigl(\frac{ u_{xy} }{ A_{xy} }\bigr)p_{xy}\\
&=-H_P^1(x)+\log\rho_\lG-\sum_{y\sim x} p_{xy}\log ( F(y)/F(x) ).
\end{align*}
By assumption, $h_P(o)=\lim_n\frac{1}{n}{H}_P^n(o)$ exists for almost all $o \in (X,\mu)$, and it should coincide with the one step entropy production averaged over the stationnay measure $\nu$, therefore
\begin{align*}
   \int_X g(x) \,\nu(\mathrm{d}x) &= \int_X\sum_{y\sim x} (\log\rho_\lG - \log  \left( \frac{F(y)}{F(x)} \right)+ \log \left(\frac{p_{xy}}{A_{x,y}}\right))p_{xy}\,\nu(\mathrm{d}x)\\
   &=\log \rho_\lG -  \int_X h_P(o)\nu(\dd x) \geq 0. 
\end{align*}
Since $h_P(o)=h_\mathrm{top}$, we get that $g(x)=0$ implying $(p_{xy})_{y\sim x}=(u_{xy})_{y\sim x}$ for almost all $x\in(X,\nu)$. But the support of $\nu$ is a rerooting–invariant Borel subset of $X$ of positive measure. By ergodicity, this set must have full measure $\mu$, and therefore $P = U$.
\end{proof}

We now turn to the proof the Membrane method for invariant loop process.

\begin{proof}[Proof of Theorem \ref{thm:loopproc}]
We first realize the invariant loop process as a graphing. 
Let $G$ be a unimodular vertex-transitive $d$-regular graph and $\omega\subseteq V(G)$ an $\mathrm{Aut}(G)$-invariant random subset. Consider the space
\[
X = \{0,1\}^{V(G)},
\]
endowed with the product topology, and let $\mu$ be the $\mathrm{Aut}(G)$-invariant probability measure induced by the law of~$\omega$.

The graphing $\mathcal{G}=(X,\mu,A)$ is defined by putting an edge between $\omega \sim \omega^\prime$, (that is $(\omega,\omega^\prime)\in E$), when two configurations are obtained by moving the root $o$ of $G$ along an edge. When $G$ is not a Cayley graph, one may need to break symmetries using an i.i.d.\ labeling (Bernoulli graphing). This construction is standard and preserves unimodularity.

Consider the graphing adjacency operator $A_\lG = (\mathbf{1}_{\{(x,y)\in E\}})$ of the loop process. Since $G$ is $d$-regular and the graphing is ergodic, the constant function $\mathbf{1}_X$ is the unique top eigenfunction
\[
A_{\mathcal{G}} \mathbf{1}_X = d\,\mathbf{1}_X.
\]
By assumption, this operator has spectral gap $\delta := d - \rho_g$, where the global spectral radius satisfies $\rho_g := \bigl\| A_{\mathcal{G}}\big|_{\mathbf{1}_X^\perp} \bigr\| < d$. Therefore
\[
\Sigma_{L^{2}(X,\mu)}(A_{\mathcal{G}})
\subset [-\rho_g,\rho_g] \cup \{d\}.
\]
We consider the diagonal graphing $\lV=(V,X,\mu)$ where $V=\{(\omega,\omega) \colon o \in \omega \}$. Note that since $V$ is diagonal, the MTP is trivially verified. Given $\sigma>0$, we consider the weighted graphing $\lH_\sigma = \lG+\sigma \lV=(X,\mu,H_\sigma)$ in a natural way. Note that for each $\omega\in X$, the leaf $(H_\sigma,\omega)=(G+\sigma V,\omega)$ corresponds to the loop perturbed regular graph $G+\sigma V_\omega$ as in definition \ref{def:lprg}. We claim that the graphing adjacency operators 
$$A_{\lH_\sigma}=A_{\mathcal{G}}+\sigma A_{\mathcal{V}}$$
has a top eigenvector with a spectral gap.
\medskip

When $\sigma$ is small, this is fairly standard operator theory, we include here for convenience. Let $z$ in the complex plane with $|z-d|=(d-\rho_g)/2$, and $\sigma<(d-\rho_g)/2$, on has $\| \sigma A_{\lV}  (A_{\mathcal{G}} - z)^{-1}\| < 1 $, hence the Born series
\[
(A_{\lH_\sigma} - z)^{-1}
= (A_{\mathcal{G}} - z)^{-1}
  \sum_{k=0}^\infty \bigl(- \sigma A_{\mathcal{V}} (A_{\mathcal{G}} - z)^{-1}\bigr)^k
\]
converge in norm. This in particular implies that the spectrum of $A_{\lH_\sigma}$ is in the Minkowski sum between the spectrum of $A_\lG$ and $[\pm \sigma]$. In particular 
\begin{equation}\label{eq:Mink}
  \Sigma_{L^2(X,\mu)}(A_{\lH_\sigma}) \subset [-\rho_g-\sigma,\rho_g+\sigma] \bigsqcup [d-\sigma,d+\sigma]   
\end{equation}
Define the Riesz projection by the integral on $\gamma=\{z : |z-d|=(d-\rho_g)/2\}$ given by
\[
P_\sigma := \frac{1}{2\pi i} \int_\gamma (A_{\lH_\sigma} - z)^{-1}\,\mathrm{d}z.
\]
The integral should be understood as Riemann sum limit, converging in the strong topology of operator. It depends analytically on $\sigma$ as long as $\gamma$ stays in the resolvent set. By norm-continuity of $P_\sigma$, its rank is constant, see \cite{reed1978iv}. At $\sigma=0$, $P_0$ is the orthogonal projection onto $\mathbb{C}\mathbf{1}_X$, so $\mathrm{rank}\,P_0=1$. There is therefore a unique top eigenvector $F_\sigma$ associated to $A_{\lH_\sigma}$, detached from the rest of the spectrum.
\medskip

So far, we have only shown spectral gap up to $\sigma<(d-\rho_g)/2$ as, above this value the two interval overlap in the right side of \eqref{eq:Mink}. But since $\sigma \lV$ is a positive perturbation, one can say more. Let $\sigma_1 = (d-\rho_g)/2-\epsilon$ for some small $\epsilon>0$, one has
$$\Sigma (A_{\lH_{\sigma_1}}) \subset [-\rho_g-\sigma_1,\rho_g+\sigma_1] \bigsqcup \{\|A_{\lH_{\sigma_1}}\|\}.$$
But $\|A_{\lH_{\sigma_1}}\| \geq \| A_{\lG}\| \geq d$. Thus, there is still a spectral gap and one can reproduce the perturbation analysis by considering
$$A_{\lH_{\sigma}}=A_{\lH_{\sigma_1}}+(\sigma-\sigma_1)A_{\lV}$$
and contour integral $\gamma$ around $\| A_{\lH}\|$ instead of $d$. By repeating the analysis this allows to show spectral gap up to $\sigma < d-\rho_g$. The claim about URW follows by applying Theorem \eqref{thm:graphing}.
\end{proof}

\begin{proof}[Proof of Proposition \ref{prop:envlimit}] 
Given $o\in (X,\mu)$, the random walk on the graphing produces a walk on the leaf, denoted $X_n^P(o)$. Since
\[
X_{n+1}^P(o)=X_1^P\bigl(X_n^P(o)\bigr),
\]
the environmental limit satisfies the stationarity equation in distribution
\[
(G',P',x)\;=_d\;(G',P',X_1^{P'}(x)).
\]
Because degrees are uniformly bounded, $H_P^1(x)$ is bounded and therefore integrable. Hence, by weak convergence of $(G,P,x_n)$ to $(G',P',x)$ and bounded convergence of the one-step entropy functional,
\[
\lim_{n\to\infty}\frac{1}{n}H_P^n(o)
= E_{(G',P',o')\sim\mu'}\,H_{P'}^{1}(o')
= -\int_{\mathbb G_{\bullet}^{D,M}}\sum_{y\sim x} p_{xy}\log p_{xy}\,\mu'(\mathrm d(G,P,x)).
\]

Now assume that the walk on the graphing admits a $P$-stationary probability measure $\lambda\ll\mu$. For $o'\in(X,\lambda)$, stationarity gives
\[
(G,P,X_n^P(o'))=_d(G,P,o') \quad\text{for all }n.
\]
Since the environmental limit exists and is independent of the starting point, we also have
\[
(G,P,X_n^P(o'))\;\Rightarrow\;(G',P',x).
\]
Therefore $(G',P',x)=_d(G,P,o')$.
\end{proof}

\section{Continuity of the MERW along expander sequences}

In this section we prove Theorem \ref{thm:mambrane}. 

So far, we have worked on extending the theory of MERW from finite graphs to random rooted graphs and graphings. We have shown that the URW is the natural limiting object: under mild assumptions its rooted entropy exists, equals the topological entropy, and this property uniquely characterises it among stationary random walks. In this final section, we reverse the direction of analysis. Given a sequence of finite graphs converging locally to an infinite graph for which the URW is understood, what can be inferred about the MERW on the finite graphs? Our first result is a continuity theorem showing that, under a uniform spectral gap, the MERW converges in the Benjamini–Schramm sense to the URW of the limit.

\begin{proof}[Proof of Theorem \ref{thm:mambrane}]
After breaking symmetries using i.i.d.\ labels (Bernoulli graphings), we obtain
\[
\lH_n = \lG_n + \sigma \lV_n = (X_n,\mu_n,A_{G_n+\sigma V_{\omega_n}}), \qquad
\lH = \lG + \sigma \lV = (X,\mu,A_{G+\sigma V_{\omega}}),
\]
and we denote by $o_n$ and $o$ the random roots sampled according to $\mu_n$ and $\mu$, respectively.

Consider the spectral measure of $A_{\lH_n}$ with respect to the constant function $1_{X_n}$. It is defined by
\[
\langle 1_{X_n}, A_{\lH_n}^k 1_{X_n}\rangle
= \int \lambda^k \,\nu_{n}^{1_{X_n}}(\mathrm{d}\lambda).
\]
Benjamini--Schramm convergence implies the weak convergence of the measures $\nu_{n}^{1_{X_n}}$ to $\nu^{1_X}$, the corresponding spectral measure on the graphing $\lH$. Convergence of the unique atom of these measures on $[d,d+\sigma]$ implies that the operator norms $\rho_n \to \rho$ converge. If $\rho_n = d$, then by Theorem \ref{thm:staple}, the MERW converges to SRW on $(G,o)$, and $V_n \to 0$ in the Benjamini--Schramm sense. Otherwise, if $\rho_n \to \rho > d$, choose $\delta>0$ small enough so that
\[
\lambda_2(G_n) + \sigma < d - \delta
\quad\text{and}\quad
-\rho_n + \delta \le d \le \lambda_{\min}(A_{G+\sigma V}).
\]
As usual, we realize the finite graphs and their limit as graphings, and therefore the limiting graphing also has a spectral gap. By Theorem \ref{thm:graphing}, the limiting graphing admits a URW.

It remains to show that the top eigenvectors $F_n$ converge to $F$ in the Benjamini--Schramm sense. From the convergence of the mass at the atom, we obtain
\[
\langle F_n, 1_{X_n}\rangle \to \langle F, 1_X\rangle.
\]
By the spectral decomposition of $A_{\lH_n}$, as in \eqref{eq:convestimate}, we have
\[
\bigl\| c_n F_n - \tfrac{W_k}{\rho_n^k}\bigr\|_{L^2(\lG_n)}
\le \left(1 - \frac{\delta}{d}\right)^k.
\]
Note that $W_k$ depends only on the $k$-neighborhood of $o_n$. A standard $\varepsilon/3$ argument then shows that $F_n \to F$ in the Benjamini--Schramm topology.
\end{proof}

Our second result demonstrates that the asymptotic behaviour of MERW is, in a weak sense, governed by the entropy rate along the sequence.

\begin{proof}[Proof of Theorem \ref{thm:staple}]
Write
\[
H_n = G_n + \sigma_n V_{\omega_n}, 
\qquad 
P_n = \mathrm{MERW}(H_n).
\]
Assume that $\lim \rho_n =d$. We must show first that  \(H_n \to (G,o)\), and then that \((H_n,P_n)\to (G,\mathrm{SRW},o)\).

\[
\rho_n 
\;\ge\; 
\frac{1}{|V(G_n)|} 
\langle \mathbf{1}_{V(G_n)}, A_{H_n}\mathbf{1}_{V(G_n)} \rangle
\;=\;
d + \sigma_n\,\frac{|\omega_n|}{|V(G_n)|}.
\]
Thus the perturbation $\sigma_n V_{\omega_n}$ vanishes in the BS sense (for weighted graphs), so \(H_n \to (G,o)\).

\smallskip
Let
\[
(H_n,P_n) \to (G,P,o)
\]
along a subsequence. We claim that \(P\) is necessarily the simple random walk, by the following rigidity argument. By construction, \((G,P,o)\) is a unimodular random rooted graph equipped with a Markov kernel, that is a Doob walk of energy \(d\). The Mass–Transport Principle gives
\[
1 
= \mathbb{E}_{(G,P,o)}\Big[\sum_{x \in V(G)} p_{o x}\Big]
= \mathbb{E}_{(G,P,o)}\Big[\sum_{x \in V(G)} p_{x o}\Big]
= \mathbb{E}_{(G,P,o)}\Big[\sum_{x \in V(G)} \frac{1}{d^{2}\, p_{o x}}\Big].
\]
By the harmonic–arithmetic mean inequality, equality forces
\[
p_{o x} = \frac{1}{d} \quad \text{for all } x\sim o.
\]
Hence \(P=\mathrm{SRW}\) almost surely. Since every subsequential limit is identical, compactness yields convergence of the full sequence.

\medskip

Assume now part 2). Any subsequential limit of \((H_n,P_n)\) must then be a Doob walk of energy \(\rho>d\), which rules out \(\mathrm{SRW}(G)\) as a possible subsequential limit.
\end{proof}

\bibliographystyle{plain}
\bibliography{biblio}

\end{document}